\numberwithin{equation}{section}
\setlist[enumerate]{noitemsep, topsep=2pt}
\setlist[itemize]{noitemsep, topsep=2pt}
\definecolor{ao(english)}{rgb}{0.0, 0.5, 0.0}
\definecolor{cadmiumgreen}{rgb}{0.0, 0.42, 0.24}
\definecolor{darkpastelgreen}{rgb}{0.01, 0.75, 0.24}
\tikzset{%
  materia/.style={draw, fill=blue!20, text width=6.0em, text centered, minimum height=1.5em,drop shadow},
  etape/.style={materia, text width=8em, minimum width=10em, minimum height=3em, rounded corners, drop shadow},
  texto/.style={above, text width=6em, text centered},
  linepart/.style={draw, thick, color=black!50, -LaTeX, dashed},
  line/.style={draw, thick, color=black!50, -LaTeX},
  ur/.style={draw, text centered, minimum height=0.01em},
  back group/.style={fill=white!20,rounded corners, draw=black!50, dashed, inner xsep=15pt, inner ysep=10pt},
}
\tikzstyle{matheq} = [node distance=8.75cm, text width=21em, minimum width=1cm,
\newcommand{\gkc}{\|\gk\|^{1/2}}
\newcommand{\xk}{x_k}
\newcommand{\Hk}{\nabla^2 f(x_k)}
\newcommand{\gk}{\nabla f(x_k)}
\newcommand{\dk}{d_k}
\newcommand{\gkn}{\nabla f(x_{k+1})}
\newcommand{\xkn}{x_{k+1}}
\newcommand{\cmark}{\ding{51}}%
\newcommand{\xmark}{\ding{55}}%
\newcommand{\red}[1]{\textcolor{red}{#1}}
\newtheorem{theorem}{Theorem}[section]
\newtheorem{corollary}{Corollary}[section]
\newtheorem{lemma}{Lemma}[section]
\newtheorem{definition}{Definition}[section]
\newtheorem{assumption}{Assumption}[section]
\newtheorem{strategy}{Strategy}[section]
\newtheorem{property}{Property}[section]
\newcommand{\arc}{\texttt{ARC}}
\newcommand{\newtontrst}{\texttt{Newton-TR-STCG}}
\newcommand{\iutrhvp}{\texttt{iUTR}}
\title{Beyond Nonconvexity: A Universal Trust-Region Method with New Analyses%
\thanks{Accepted by \emph{Journal of Scientific Computing}, 2026.}}
\author[1]{\small Yuntian Jiang}
\author[1]{\small Chang He}
\author[1]{\small Chuwen Zhang}
\author[1]{\small Dongdong Ge}
\author[1]{\small Bo Jiang\thanks{Corresponding author. isyebojiang@gmail.com}}
\author[2]{\small Yinyu Ye}
\affil[1]{\footnotesize School of Information Management and Engineering\\ Shanghai University of Finance and Economics}
\affil[2]{\footnotesize Department of Management Science and Engineering, Stanford University}
\begin{document}
\maketitle

\begin{abstract}
% Enter your abstract
`The trust-region (TR) method is renowned historically for 
its robustness in nonconvex problems and extraordinary numerical performance, but the study of its performance in convex optimization is somehow limited.
This paper complements the existing literature by
presenting a universal trust-region method that simultaneously incorporates the quadratic regularization and ball constraint. In particular, we introduce a novel descent property tailored for trust-region-type algorithms, enabling us to unify and streamline the analysis for both convex and nonconvex optimization. Our method exhibits an iteration complexity of $\tilde O(\epsilon^{-3/2})$ to find an $\epsilon$-approximate second-order stationary point for nonconvex optimization. Meanwhile, the analysis reveals that the universal method attains an $O(\epsilon^{-1/2})$ complexity bound for convex optimization. 
% These results are complementary to the existing literature as the trust-region method was historically conceived for nonconvex optimization. 
Finally, we develop an adaptive universal method to address practical implementations. The numerical results show the effectiveness of our method in both nonconvex and convex problems.
\end{abstract}

% \FUNDING{This research was supported by [grant number, funding agency].}

%Supplemental Material:
%Data Ethics & Reproducibility Note:

% Sample
%\KEYWORDS{Stochastic programming, Decision support,Uncertainty, Disaster response, Optimization}

% Fill in data. If unknown, outcomment the field

%%%%%%%%%%%%%%%%%%%%%%%%%%%%%%%%%%%%%%%%%%%%%%%%%%%%%%%%%%%%%%%%%%%%%%

% Text of your paper here

\section{Introduction}\label{sec:Intro}
Second-order methods are renowned for their faster convergence compared to first-order methods, and the capability to find second-order stationary points, see \citet{cartis2022evaluation,nocedal1999numerical,nesterov_lectures_2018,carmon_lower_2020,carmon2021lower}. Among these methods, the trust-region (TR) method \citep{conn2000trust} stands out as a representative approach due to its robustness in nonconvex problems and extraordinary numerical performance. Many linear or nonlinear programming solvers are using the trust-region method as an important building block, to name a few, Knitro \citep{byrd2006k}, IPOPT \citep{wachter2006implementation}, and PDFO \citep{ragonneau2024pdfo}, etc. Also, in the machine learning field, it inspires the well-known trust-region policy optimization \citep{schulman2015trust,kurutach2018model}.

Recall that the TR method operates on the local quadratic approximation of objective function $f:\mathbb{R}^n\to \mathbb{R}$ within a TR ball constraint:
\begin{equation}\label{eq.classic tr subproblem}
    \begin{aligned}
        \min_{d \in \mathbb{R}^n} \quad &m_k(d):= \nabla f(\xk)^T d+\frac{1}{2}d^T \nabla^2 f(\xk) d, \\
        \operatorname{s.t.} \quad &\|d\|\le \Delta_k.
    \end{aligned}
\end{equation}
Once the TR step $d_k$ is generated, the TR method decides whether to update or adjust the radius $\Delta_k$ based on the so-called ratio test:
\begin{equation}\label{eq.ratio test}
    \rho_k ~= \frac{f(x_k + \dk) - f(x_k)}{m_k(\dk) - m_k(0)}.
\end{equation}
Despite its success in practice as mentioned above, the theoretical development of the TR method is somehow incomplete. 

For instance, results in the early literature \citep{sorensen_newtons_1982,shultz_family_1985,conn2000trust} only established (asymptotic) local convergence to $\epsilon$-approximate second-order stationary points ($\epsilon$-SOSP). These results also implied that TR has a nonasymptotic iteration complexity of $O(\epsilon^{-2})$ for $\epsilon$-SOSPs. However, such convergence rate is no better than first-order methods \citep{nesterov_lectures_2018} for $\epsilon$-approximate first-order stationary points ($\epsilon$-FOSP), and an improved convergence rate is not aware for
convex functions.
% Moreover, as a second-order method, one would expect the iteration complexity of the TR method to be established for $\epsilon$-approximate second-order stationary points ($\epsilon$-SOSP). 
% However, the classical results only established (asymptotic) local convergence to $\epsilon$-SOSP. 
% For convex functions, no  was shown for the TR method.

In contrast, as another representative second-order method, the Cubic Regularized (CR) Newton method has the updating rule
\begin{equation}\label{eq.arc}
    \dk^\text{CR} = \arg\min_{d \in \mathbb{R}^n} \ m^{\text{CR}}_k(d):=\nabla f(\xk)^T d+\frac{1}{2}d^T \nabla^2 f(\xk) d + \frac{\sigma_k}{3}\|d\|^3, \sigma_k > 0.
\end{equation}
This method was originally proposed by \citet{griewank1981modification} and further analyzed by \citet{nesterov2006cubic}. Later, \citet{cartis2011adaptive1,cartis2011adaptive2} proposed the adaptive version of this method. It shows a convergence rate of $O(\epsilon^{-3/2})$ to find $\epsilon$-SOSPs for nonconvex functions and $O(\epsilon^{-1/2})$ for convex case simultaneously.

However, classical TR exhibits a clear gap to these convergence properties. This arises possibly due to two aspects: first, the classic TR method does not fully exploit the curvature information of Hessian; second, the ratio test \eqref{eq.ratio test} does not explicitly quantify the desired decrease for the TR steps \citep{curtis2017trust}; both are crucial in modern complexity analysis.

Therefore, a plethora of TR variants (e.g., \citep{curtis2017trust, curtis2021trust, hamad2022consistently, luenberger_linear_2021}) have been proposed to improve iteration complexity over the years. The fixed-radius variant by \citet{luenberger_linear_2021} achieves a complexity of $O(\epsilon^{-3/2})$ for finding $\epsilon$-SOSPs by controlling the stepsize proportionally to the tolerance ${\epsilon}^{1/2}$. However, this variant tends to be conservative for practical applications. By designing a contraction and expansion mechanism, \citet{curtis2017trust} introduced the first adaptive trust-region method (TRACE) matching the $\tilde{O}(\epsilon^{-3/2})$ complexity for finding $\epsilon$-SOSPs, where $\tilde{O}$ means there is a logarithmic term hidden in the complexity. Later, \citet{curtis2021trust} proposed another simplified variant of \citep{curtis2017trust} while retaining the same dependency on $\epsilon$. A notable recent trust-region method introduced by \citet{hamad2022consistently} achieves $O(\epsilon^{-3/2})$ complexity and improves the complexity's dependency over Lipschitz constants by putting together the upper and lower bounds on the stepsizes. However, their algorithm only converges to an $\epsilon$-FOSP. Generally, these novel variants transcend the classic ones to bring a per-step sufficient decrease
\begin{equation}\label{eq.classic descent}
    f(x_k+d_k) - f(x_k) \leq -\Omega(\epsilon^{3/2}).
\end{equation}
As a result, $O(\epsilon^{-3/2})$ rate of convergence can be achieved. However, for TR-type algorithms, such sufficient decrease \eqref{eq.classic descent} is highly nontrivial, due to its heavy dependence on the posterior dual multiplier of the subproblem \eqref{eq.classic tr subproblem}, which is difficult to control in advance. 
% As a price for \eqref{eq.classic descent}, 
As a price to overcome this challenge, these algorithms rely on complicated feedback to adjust the dual multiplier and may need additional assumptions. Moreover, these TR-type methods are initially designed for nonconvex optimization, and the associated convergence analysis in convex optimization is
largely missing.
However, even in many nonconvex problems, the objective function quickly turns weakly convex near some stationary points, which makes the study of TR-type methods in convex optimization equally important.

We summarize the above TR variants and other mainstream second-order methods in \autoref{table}. As an observation, existing research fails to unleash the full potential of trust-region methods:
$(a)$ \noindent While all aforementioned TR variants have improved complexity guarantee in the nonconvex case, to the best of our knowledge, neither of the nonasymptotic analyses extends to the convex case.  Thus, whether a TR method has a global $O(\epsilon^{-1/2})$ convergence rate in convex optimization remains open. $(b)$ On the practical side, the existing \emph{adaptive} trust-region methods tend to be complicated compared to other second-order methods \citep{nesterov2006cubic,mishchenko_regularized_2023,zhang2022homogenous}, embedding different subroutines and nested loops. There is a clear mismatch between the theoretical guarantees and conciseness of implementation, thus calling for a simpler trust-region method with state-of-the-art complexity rates.

\begin{table}[!t]
\scriptsize
    \centering
    \setlength\tabcolsep{0.7pt}
    \begin{threeparttable}[b]
        {
            \renewcommand\arraystretch{2.2}
            \caption{Mainstream Second-order Methods. The notation \red{\xmark} means no such results exist in the corresponding paper, and $\tilde O$ hides the logarithmic terms. $\nabla f, \ \nabla ^2 f$ in the last column denote the paper assumes Lipschitz continuity of the gradient and Hessian of the objective separately.}
            \label{table}
            \centering
            \begin{tabular}{ccccccc}\toprule[.1em]
                \bf Algorithm                                                                                      & \bf  \makecell{~~Nonconvex~~} & \bf  \makecell{~~Convex~~}  &\bf  \makecell{~~Local~~} &\bf \makecell{~~Assumptions}  \\
                \midrule
                \makecell{Standard Trust-Region Method \cite{nocedal1999numerical,curtis2018concise}}                               & $O(\epsilon^{-2})$                   & \red{\xmark}                     & \bf Quadratic &\bf $\nabla^2 f$           \\
                % \makecell{Trust-Region Variants \cite{curtis2017trust,hamad2022consistently,luenberger_linear_2021,curtisWorstCaseComplexityTRACE2023}}                                             & $O(\epsilon^{-3/2})$                 & \red{\xmark}                      & \bf Quadratic            \\
                \makecell{TRACE \cite{curtis2017trust,curtisWorstCaseComplexityTRACE2023}}                         & $\tilde{O}(\epsilon^{-3/2})$                       & \red{\xmark}               & \bf Quadratic &\bf $\nabla^2 f,\ \nabla f$           \\
                \makecell{CAT \cite{hamad2022consistently}}                                                        & $O(\epsilon^{-3/2})$                       & \red{\xmark}               & \bf Quadratic &\bf $\nabla^2 f$           \\
                \makecell{Fixed Radius TR \cite{luenberger_linear_2021}} & $O(\epsilon^{-3/2})$              & \red{\xmark}                    & \bf Quadratic &\bf $\nabla^2 f$         \\
                 \makecell{TR-NCG \cite{curtis2021trust}} & $\Tilde{O}(\epsilon^{-3/2})$              & \red{\xmark}                    & \bf Quadratic$^\dagger$ &\bf $\nabla^2 f$         \\
                 \makecell{Gradient-Regularized Newton Method \cite{doikov2024gradient,mishchenko_regularized_2023}} & \textcolor{red}{\xmark}              & $O(\epsilon^{-1/2})$                    & \bf Superlinear         & $\nabla^2 f$ \\
                \makecell{SOAN2C \cite{gratton2023yet}}                                                            & $\tilde O(\epsilon^{-3/2})$          & \textcolor{red}{\xmark}                      & \bf Quadratic$^\dagger$  & $\nabla f,\nabla^2 f$\\
                \makecell{Damped Newton Method \cite{hanzely2022damped}}                                         & \textcolor{red}{\xmark}              &
                $O(\epsilon^{-1/2})$                                                                                             & \bf Quadratic   &         $\ddagger$                                                                         \\
                % \makecell{Super Universal Newton method \cite{doikov_super-universal_2024}}                                   & \textcolor{red}{\xmark}              & $O(\epsilon^{-1/2})$          & $O(\epsilon^{-1/3})^\ddagger$         & \bf Superlinear          \\
                \makecell{Cubic Regularized Newton Method \cite{nesterov2006cubic}}      & $O(\epsilon^{-3/2})$                 &
                $O(\epsilon^{-1/2})$                                                                                                & \bf Quadratic    &$\nabla^2 f$                                                                                \\
                \midrule
                \makecell{\textbf{UTR-Simple}}                                                  & $\tilde O(\epsilon^{-3/2})$ & $O(\epsilon^{-1/2})$  & \bf Superlinear &\bf $\nabla^2 f$ \\
                \makecell{\textbf{UTR-Adaptive}}                                         & $\tilde O(\epsilon^{-3/2})$ & $O(\epsilon^{-1/2})$               & \bf Quadratic &\bf $\nabla^2 f$   \\
                \bottomrule[.1em]
            \end{tabular}
        }
        \begin{tablenotes}
            \scriptsize
            \item $\dagger$ The method \cite{gratton2023yet,curtis2021trust} does not provide local convergence analysis. We believe this should be true following standard analysis.
            \item $\ddagger$ The method \cite{hanzely2022damped} assumes the objective function is strictly convex and uses a stronger version of the self-concordance \cite{nesterov1994interior}.
        \end{tablenotes}
    \end{threeparttable}
\end{table}

\paragraph{Contributions.} Given this background, we introduce a simple yet potent TR variant: a universal trust-region (UTR) method based on the following subproblem:
\begin{equation}
    \begin{aligned}
        \min_{d \in \mathbb{R}^n} \quad &  \frac{1}{2}d^T\left (\nabla^2 f(\xk) + \sigma_k \left \|\nabla f(\xk)\right\|^{1/2} I \right ) d + \nabla f(\xk)^T d\\
        \text{s.t. } \quad & \|d\| \le \Delta_k := r_k \|\nabla f(\xk)\|^{1/2},
        \label{eq.newTR}
    \end{aligned}
\end{equation}
where $\sigma_k, r_k > 0$ are hyperparameters. The feature of the  subproblem above is incorporating gradient regularization in the local quadratic model and setting the trust-region radius proportional to the square root of the gradient norm. As a result, the $d_k$ generated by \eqref{eq.newTR} exhibits a powerful feature referred to as \textbf{Function-or-Stationarity-Decrease} (FOSD, \autoref{cond.function or gradient}), i.e., \textit{after one successful update, either the function value decreases sufficiently or the gradient norm shrinks linearly.}  Different from \eqref{eq.classic descent}, this desirable property enables a concise and clean algorithm which does not need sophisticated inner loop to control the posterior multiplier $\lambda_k$, and the associated complexity results have the state-of-the-art dependency over both the tolerance $\epsilon$ and the Lipschitz constant, requiring only the most standard assumptions. 

More specifically, with explicit information on the Hessian Lipschitz constant, we propose a simple strategy to choose parameters $(\sigma_k, r_k)$ (\autoref{strategy.simple strategy}), and UTR converges to an $\epsilon$-FOSP within $\tilde O(\epsilon^{-3/2})$ iterations. Surprisingly, we also find that UTR exhibits an improved $O(\epsilon^{-1/2})$ complexity when the objective function is convex. To the best of our knowledge, this is the first nonasymptotic analysis of TR-type methods in convex optimization.

Under the guidance of the simple strategy, we develop an adaptive approach (\autoref{strategy.SOSP}) that is more practical without relying on the Lipschitz constant. By fully exploiting the curvature information of the Hessian, this adaptive approach provides a convergence guarantee with an iteration complexity of $\tilde O(\epsilon^{-3/2})$ for finding $\epsilon$-SOSPs. 

Finally, in numerical experiments, our method is compared against ARC \citep{cartis2011adaptive1,cartis2011adaptive2}, the newly proposed TR variant \citep{hamad2022consistently}, and the recent gradient regularized Newton method \citep{mishchenko_regularized_2023}, demonstrating the effectiveness of UTR.

\section{The Universal Trust-Region Method}\label{sec.overview}
In this paper, we consider the following unconstrained optimization problem
\begin{equation}
    \label{eq.main problem}
    \min_{x\in \mathbb{R}^n} \quad f(x),
\end{equation}
where $f:\mathbb{R}^n \to \mathbb{R}$ is twice differentiable and is bounded below, i.e., $f^*:=\inf _{x\in \mathbb{R}^n} f(x) >-\infty$. For nonconvex $f$, we aim to find $\epsilon$-approximate stationary points defined as follows:
% We aim to find an $\epsilon$-approximate second-order stationary point ($\epsilon$-SOSP) satisfying
% \begin{subequations}
%     \begin{align}
%         \label{eq.epsilon solution fo}
%          & \|\nabla f(x) \| \leq O(\epsilon)                             \\
%         \label{eq.epsilon solution so}
%          & \lambda_{\min}(\nabla ^2 f(x)) \geq -\Omega(\epsilon^{1/2}),
%     \end{align}
% \end{subequations}
\begin{definition}
    \label{def.sosp}
    A point $x\in \mathbb{R}^n$ is called an $\epsilon$-approximate first-order stationary point ($\epsilon$-FOSP) of \eqref{eq.main problem} if it satisfies
        \begin{equation}
            \label{eq.epsilon solution fo}
              \|\nabla f(x) \| \leq O(\epsilon)                            
    \end{equation}
    Moreover, we call $x$ an $\epsilon$-approximate second-order stationary point ($\epsilon$-SOSP) of \eqref{eq.main problem} if it satisfies \eqref{eq.epsilon solution fo} and the following condition   
    \begin{equation}
            \label{eq.epsilon solution so}
              \lambda_{\min}(\nabla ^2 f(x)) \geq \Omega(-\sqrt{\epsilon}).
    \end{equation}
\end{definition}

For convex $f$, we aim to find the approximate solution such that
    \begin{equation}
        \label{eq.conceptual convex approximate solution}
        f(x)-f^* \leq O(\epsilon).
    \end{equation}
Throughout the paper, we adopt the following standard assumption on the objective function $f$, commonly used in the complexity analysis of second-order methods.
\begin{assumption}
    \label{assm.lipschitz}
    The Hessian $\nabla^2 f(x)$ of the objective function is Lipschitz continuous with constant $M>0$, i.e.,
    \begin{equation}
        \label{eq.lipschiz}
        \|\nabla^2 f(x)-\nabla^2 f(y)\| \leq M\|x-y\| \quad \forall x,y\in\mathbb{R}^n.
    \end{equation}
\end{assumption}
As a consequence, \autoref{assm.lipschitz} implies the following results.
\begin{lemma}[Lemma 4.1.1, \citet{nesterov_lectures_2018}]\label{lem.lipschitz}
    If \(f:\mathbb{R}^n \mapsto \mathbb{R}\) satisfies \autoref{assm.lipschitz}, then for all \(x,y\in \mathbb{R}^n\), we have
    \begin{subequations}
        \begin{align}
            \label{eq.first-order exp}
             & \left\|\nabla f(y)-\nabla f(x)-\nabla^{2} f(x)(y-x)\right\|                       \leq \frac{M}{2}\|y-x\|^{2} \\
            \label{eq.second-order exp}
             & \left|f(y)-f(x)-\nabla f(x)^T(y-x)-\frac{1}{2}(y-x)^T\nabla^{2} f(x)(y-x)\right|  \leq \frac{M}{6}\|y-x\|^{3}
        \end{align}
    \end{subequations}
\end{lemma}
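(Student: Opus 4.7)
The plan is to derive both bounds from the fundamental theorem of calculus applied along the segment $x + t(y-x)$ for $t\in [0,1]$, combined directly with \autoref{assm.lipschitz}. Since the Hessian is Lipschitz, the difference $\nabla^2 f(x+t(y-x)) - \nabla^2 f(x)$ is controlled in operator norm by $Mt\|y-x\|$, and integration against the appropriate weight in $t$ will turn this pointwise bound into the stated polynomial estimates in $\|y-x\|$.

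For the first inequality, I would start from $\nabla f(y) - \nabla f(x) = \int_0^1 \nabla^2 f(x+t(y-x))(y-x)\,dt$ and subtract the trivially-integrated constant $\nabla^2 f(x)(y-x) = \int_0^1 \nabla^2 f(x)(y-x)\,dt$, which gives
\[
\nabla f(y) - \nabla f(x) - \nabla^2 f(x)(y-x) = \int_0^1 \bigl[\nabla^2 f(x+t(y-x)) - \nabla^2 f(x)\bigr](y-x)\,dt.
\]
Taking Euclidean norms under the integral sign and invoking \autoref{assm.lipschitz} bounds the integrand by $Mt\|y-x\|^2$, and $\int_0^1 t\,dt = 1/2$ then yields the constant $M/2$.

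For the second inequality, I would use Taylor's theorem with integral remainder in the form
\[
f(y) - f(x) - \nabla f(x)^T(y-x) = \int_0^1 (1-t)\,(y-x)^T \nabla^2 f(x+t(y-x))(y-x)\,dt,
\]
and subtract the analogous integral representation $\frac{1}{2}(y-x)^T \nabla^2 f(x)(y-x) = \int_0^1 (1-t)(y-x)^T \nabla^2 f(x)(y-x)\,dt$. Passing the absolute value inside the integral and applying Cauchy--Schwarz together with \autoref{assm.lipschitz} gives an integrand bounded by $(1-t)\,Mt\,\|y-x\|^3$, after which $\int_0^1 t(1-t)\,dt = 1/6$ produces the claimed constant $M/6$.

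This is a standard derivation; the only mild subtlety is selecting the correct form of the integral remainder in the second step (namely, the one with weight $(1-t)$ that exposes $\nabla^2 f$ rather than a third derivative), so that \autoref{assm.lipschitz} can be applied directly. No essential obstacles are anticipated beyond this bookkeeping.
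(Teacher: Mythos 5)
Your derivation is correct and is precisely the standard integral-remainder argument behind this result; the paper itself offers no proof, simply citing \citet{nesterov_lectures_2018}, where the same computation (fundamental theorem of calculus for the gradient, Taylor's theorem with the $(1-t)$-weighted remainder for the function value, each combined with the Lipschitz bound on the Hessian) appears. The constants $M/2$ and $M/6$ arising from $\int_0^1 t\,dt$ and $\int_0^1 t(1-t)\,dt$ are handled correctly, so there is nothing to add.
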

\subsection{Overview of the Method}
Now we introduce the universal trust-region method in \autoref{alg.conceptual utr}.
\begin{algorithm}[ht]
    \caption{A \textbf{U}niversal \textbf{T}rust-\textbf{R}egion Method (UTR)}\label{alg.conceptual utr}
        \KwData{Initial point $x_0\in \mathbb{R}^n$}
        \For{$k=0, 1, \ldots, T$}{
        Adjust $\left(\sigma_k, r_k\right)$ by a proper strategy\;
        Solve the subproblem \eqref{eq.newTR} and obtain the direction $d_k$\;
        \uIf{$\dk$ is good enough}{
        Update $x_{k+1} = x_k + d_k$\;}
        \Else{
        Go to Line 3\;
        }
        }
\end{algorithm}
In particular, at each iteration $k$, we solve the subproblem \eqref{eq.newTR}. The mechanism of our trust-region method is straightforward, comprising only three major steps: setting the appropriate parameters $\sigma_k$ and $r_k$ by some strategy, solving the trust-region subproblem \eqref{eq.newTR}, and updating the iterate whenever $\dk$ is good enough.

The crux of our method lies in the selection of proper parameters $\sigma_k$ and $r_k$. This choice guides the model \eqref{eq.newTR} to generate \textit{good steps} that meet favorable descent properties to establish our iteration complexity bounds, i.e., the number of iterations $T$ required to find the desired solutions. We define the following properties which are tailored for the universal trust-region method.
\begin{property}[Monotone-Decrease]
    \label{cond.monotone}
    The step decreases the value of the objective function, that is for each iteration $k$,
    \begin{equation}
        \label{eq.monotonically decrease}
        f(\xk+\dk)-f(\xk)\leq 0.
    \end{equation}
\end{property}
\begin{property}[Function-or-Stationarity-Decrease]\label{cond.function or gradient}
    For some $0<\xi<1$, $\kappa>0$, the step $\dk$ either decreases the value of the objective function or decreases the gradient norm sufficiently, that is for each iteration $k$,
    \begin{equation}
        \label{eq.descent condition}
        f(\xk+\dk)-f(\xk)\leq -\frac{\kappa}{\sqrt{M}} \|\gk\|^{3/2} \  \text{ or } \ \|\nabla f(\xk+\dk)\|\leq \xi\|\gk\|.
    \end{equation}
\end{property}
For \autoref{cond.monotone}, it serves as a safeguard to ensure that the algorithm never discards the progress made in the previous iterations. 
\autoref{cond.function or gradient} is novel in the sense that we do not force every iteration to bring about a sufficient decrease in the function value but instead allow a linear contraction of the norm of the gradient as an alternative. Consequently, our analysis is largely simplified compared to other TR variants and uses only the most standard assumptions. Moreover, our analysis is almost unified for both convex and nonconvex optimization.

Unlike the classic ratio test, we design a new mechanism (\autoref{strategy.simple strategy}) to ensure monotonicity: when we know the Lipschitz constant $M$. We show that the ratio test can be removed and every step is a good step. Moreover, even if we do not know the constant, we can perform a simple search procedure to ensure monotonicity.
% Further, we require that in each iteration, the step should bring sufficient decent in the following manner.

We later prove the complexity results by establishing \eqref{eq.monotonically decrease} and \eqref{eq.descent condition}, and their modifications for convex functions (\autoref{cond.convex gradient}) as well as adaptiveness in choosing the parameters (\autoref{cond.modified function or gradient des}).
Moreover, we give general principles where parameter selection can be designed based on the information available at hand.

\subsection{Basic Properties of the Method}
We present some preliminary analysis of our method. Similar to the standard trust-region method, the optimality conditions of \eqref{eq.newTR} are provided as follows.
\begin{lemma}\label{lem.optimal condition}
    The direction $\dk$ is the solution of \eqref{eq.newTR} if and only if there exists a dual multiplier $\lambda_k \geq 0 $ such that
    \begin{subequations}
        \begin{align}
            \label{eq.optcond primal}
             & \| d_k\| \leq r_k\|\gk\|^{1/2}                                        \\
            \label{eq.optcond coml slack}
             & \lambda_k \left(\|d_k\|-r_k\|\gk\|^{1/2} \right)=0                    \\
            \label{eq.optcond firstorder}
             & \left(\Hk + \sigma_k \|\nabla f(x_k)\|^{1/2} I + \lambda_k I \right) \dk = -\gk \\
            \label{eq.optcond secondorder}
             & \Hk + \sigma_k \|\nabla f(x_k)\|^{1/2} I + \lambda_k I \succeq 0.
        \end{align}
    \end{subequations}
\end{lemma}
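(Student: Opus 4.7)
The lemma is the KKT characterization of the trust-region subproblem \eqref{eq.newTR} augmented with the quadratic regularizer $\sigma_k\|\gk\|^{1/2} I$. Since the regularizer is just an additive PSD shift of the Hessian, the problem is structurally identical to the classical (possibly nonconvex) trust-region subproblem with effective Hessian $\tilde H_k := \Hk + \sigma_k\|\gk\|^{1/2} I$ and radius $\tilde\Delta_k := r_k\|\gk\|^{1/2}$. My plan is therefore to reduce to the standard Moré--Sorensen characterization of the TRS, treating sufficiency by a direct completion of squares and necessity by invoking the nonconvex-TRS theory.

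For the sufficiency direction, I would start from the postulated multiplier $\lambda_k \ge 0$ and matrix $A_k := \tilde H_k + \lambda_k I$, which is PSD by \eqref{eq.optcond secondorder}. For any feasible $d$ with $\|d\|\le r_k\|\gk\|^{1/2}$, I use \eqref{eq.optcond firstorder}, namely $\gk = -A_k \dk$, to rewrite the model gap. The routine algebra gives
\begin{equation*}
    m_k(d)-m_k(\dk) \;=\; \tfrac{1}{2}(d-\dk)^T A_k (d-\dk) \;-\; \tfrac{\lambda_k}{2}\bigl(\|d\|^2-\|\dk\|^2\bigr).
\end{equation*}
The first term is nonnegative because $A_k\succeq 0$, and complementary slackness \eqref{eq.optcond coml slack} together with feasibility gives $\lambda_k\|\dk\|^2 = \lambda_k r_k^2\|\gk\|$ and $\lambda_k\|d\|^2\le \lambda_k r_k^2\|\gk\|$, so the second term is also nonnegative. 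Hence $\dk$ minimizes \eqref{eq.newTR}.

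For the necessity direction, assume $\dk$ solves \eqref{eq.newTR}. First-order KKT for the ball constraint (which satisfies Slater / LICQ generically and Mangasarian--Fromovitz at any boundary point since $\nabla\|d\|^2 = 2d\neq 0$ when $\dk\neq 0$, while $\dk=0$ is interior) yields the primal feasibility \eqref{eq.optcond primal}, complementary slackness \eqref{eq.optcond coml slack}, and the stationarity equation \eqref{eq.optcond firstorder} for some $\lambda_k\ge 0$. The delicate part is the second-order condition \eqref{eq.optcond secondorder}, which does \emph{not} follow from elementary KKT when $\tilde H_k$ is indefinite. I would invoke the classical Moré--Sorensen theorem for the trust-region subproblem (see, e.g., \citet{conn2000trust}), which states that any global minimizer of a quadratic over a Euclidean ball is characterized precisely by the existence of a multiplier $\lambda_k\ge 0$ satisfying primal feasibility, complementary slackness, stationarity, and the global PSD condition $\tilde H_k + \lambda_k I \succeq 0$. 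Applying this theorem to the effective data $(\tilde H_k, \gk, \tilde\Delta_k)$ delivers all four conditions.

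The main obstacle is precisely this global second-order condition in the necessity direction: it cannot be deduced from local KKT alone when $\tilde H_k$ is indefinite, and is the substantive content of the Moré--Sorensen characterization. Since the result is classical and the regularizer $\sigma_k\|\gk\|^{1/2} I$ only changes the effective Hessian, I expect the proof in the paper to be very short, essentially a direct citation; one could alternatively give the completion-of-squares argument above for sufficiency and appeal to the hard case analysis of the TRS for necessity.
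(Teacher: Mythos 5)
Your proposal is correct and matches the paper's approach: the paper simply observes that \eqref{eq.newTR} is a standard trust-region subproblem with effective Hessian $\Hk + \sigma_k\|\gk\|^{1/2}I$ and radius $r_k\|\gk\|^{1/2}$ and cites the classical characterization (Theorem 4.1 of \citet{nocedal1999numerical}, i.e.\ Mor\'e--Sorensen), exactly as you anticipated. Your completion-of-squares argument for sufficiency is a correct self-contained supplement, but it is not needed beyond the citation the paper uses.
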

The results are directly obtained from Theorem 4.1 in \citet{nocedal1999numerical}, and we omit the proof for succinctness. In the remaining part of this paper, we use $(\dk,\lambda_k)$ to denote the primal-dual solution pair of the subproblem at iteration $k$. Accounting for the optimality conditions \eqref{eq.optcond primal}-\eqref{eq.optcond secondorder}, we could establish the following lemmas, which provide an estimation for the objective function and the gradient norm at the next iterate.
\begin{lemma}\label{lem.function decrease conceptual}
    Suppose that \autoref{assm.lipschitz} holds and $(d_k, \lambda_k)$ satisfies the optimal conditions \eqref{eq.optcond primal}-\eqref{eq.optcond secondorder}, we have
    \begin{equation}
        \label{eq.functionvalue d}
        f(\xk+\dk) \leq f(\xk) - \left(\frac{1}{2r_k} \cdot \frac{\lambda_k}{\gkc} + \frac{\sigma_k}{2r_k} - \frac{M}{6}\right)\|\dk\|^3.
    \end{equation}
    Additionally, if $\lambda_k \neq 0$, it follows that
    \begin{equation}
        \label{eq.functionvalue g}
        f(\xk+\dk) \leq f(\xk) - \left(\frac{1}{2r_k} \cdot \frac{\lambda_k}{\gkc} + \frac{\sigma_k}{2r_k} - \frac{M}{6}\right)r_k^3\|\nabla f(x_k)\|^{3/2}.
    \end{equation}
\end{lemma}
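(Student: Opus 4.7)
The plan is to combine the Lipschitz Hessian inequality \eqref{eq.second-order exp} with the first- and second-order optimality conditions \eqref{eq.optcond firstorder}--\eqref{eq.optcond secondorder} to bound the quadratic part of the Taylor expansion, and then use the trust-region bound \eqref{eq.optcond primal} to convert a $\|d_k\|^2$ term into a $\|d_k\|^3$ term.

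First, I would apply the cubic upper model from \autoref{lem.lipschitz}:
\begin{equation*}
    f(x_k+d_k) \le f(x_k) + g_k^T d_k + \tfrac{1}{2} d_k^T H_k d_k + \tfrac{M}{6}\|d_k\|^3.
\end{equation*}
Next, I would use \eqref{eq.optcond firstorder} to eliminate $g_k$ via $g_k^T d_k = -d_k^T H_k d_k - \sigma_k\|g_k\|^{1/2}\|d_k\|^2 - \lambda_k \|d_k\|^2$, so that
\begin{equation*}
    g_k^T d_k + \tfrac{1}{2} d_k^T H_k d_k = -\tfrac{1}{2} d_k^T H_k d_k - \bigl(\sigma_k\|g_k\|^{1/2} + \lambda_k\bigr)\|d_k\|^2.
\end{equation*}
The positive semidefiniteness condition \eqref{eq.optcond secondorder} gives $d_k^T H_k d_k \ge -(\sigma_k\|g_k\|^{1/2}+\lambda_k)\|d_k\|^2$, so the right-hand side is bounded above by $-\tfrac{1}{2}(\sigma_k\|g_k\|^{1/2} + \lambda_k)\|d_k\|^2$.

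The key conversion step is to turn $\|d_k\|^2$ into $\|d_k\|^3$. Writing $\|d_k\|^2 = \|d_k\|^3/\|d_k\|$ and using \eqref{eq.optcond primal} which ensures $\|d_k\| \le r_k\|g_k\|^{1/2}$, we obtain
\begin{equation*}
    -\tfrac{1}{2}(\sigma_k\|g_k\|^{1/2} + \lambda_k)\|d_k\|^2 \le -\frac{1}{2r_k}\Bigl(\sigma_k + \frac{\lambda_k}{\|g_k\|^{1/2}}\Bigr)\|d_k\|^3.
\end{equation*}
Plugging this back into the Taylor expansion yields \eqref{eq.functionvalue d}.

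For \eqref{eq.functionvalue g}, I would invoke the complementary slackness \eqref{eq.optcond coml slack}: when $\lambda_k \neq 0$, we must have $\|d_k\| = r_k\|g_k\|^{1/2}$, so $\|d_k\|^3 = r_k^3 \|g_k\|^{3/2}$, and substituting this identity in \eqref{eq.functionvalue d} gives \eqref{eq.functionvalue g} directly. No real obstacle is anticipated here; the only subtle point is the direction of the inequality when replacing $\|d_k\|^2$ by $\|d_k\|^3/(r_k\|g_k\|^{1/2})$, which relies on the sign of the coefficient $\sigma_k + \lambda_k/\|g_k\|^{1/2}$ being nonnegative, ensured by $\sigma_k,\lambda_k \ge 0$.
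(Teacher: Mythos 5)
Your proposal is correct and follows essentially the same route as the paper's proof: the cubic Taylor upper bound from \autoref{lem.lipschitz}, elimination of $g_k$ via \eqref{eq.optcond firstorder}, the positive semidefiniteness \eqref{eq.optcond secondorder} to drop (or bound) the quadratic term, the trust-region bound \eqref{eq.optcond primal} to convert $\|d_k\|^2$ into $\|d_k\|^3$, and complementary slackness \eqref{eq.optcond coml slack} for the case $\lambda_k \neq 0$. Your direct use of $d_k^T H_k d_k \ge -(\sigma_k\|g_k\|^{1/2}+\lambda_k)\|d_k\|^2$ is just a rephrasing of the paper's regrouping of the quadratic form, so there is no substantive difference.
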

\begin{proof}
    By the $M$-Lipschitz continuous property of $\nabla^2 f(x_k)$ and \autoref{lem.optimal condition}, we conclude
    \begin{subequations}
        \begin{align}
            f(\xk+\dk) - f(\xk) & \leq \gk^T\dk +\frac{1}{2}\dk^T\Hk\dk +\frac{M}{6}\|\dk\|^3 \notag                                                      \\
                                & = - \left(\lambda_k +  \sigma_k\gkc\right) \|\dk\|^2 - \frac{1}{2}\dk^T\Hk\dk  + \frac{M}{6}\|\dk\|^3 \notag         \\
                                & = - \frac{1}{2} \left(\lambda_k +  \sigma_k\gkc\right) \|\dk\|^2 \notag                                              \\
                                & \qquad- \frac{1}{2}\dk^T\left(\Hk + \sigma_k \gkc I + \lambda_k I \right)\dk + \frac{M}{6}\|\dk\|^3 \notag           \\
                                & \le - \frac{1}{2} \left(\lambda_k +  \sigma_k\gkc\right) \|\dk\|^2 + \frac{M}{6}\|\dk\|^3 \notag                     \\
                                & = - \frac{1}{2} \left(\lambda_k/\gkc + \sigma_k\right)\gkc \|\dk\|^2 + \frac{M}{6}\|\dk\|^3 \notag                   \\
                                & \le -\left(\frac{1}{2r_k} \cdot \frac{\lambda_k}{\gkc} + \frac{\sigma_k}{2r_k} - \frac{M}{6}\right)\|\dk\|^3. \notag
        \end{align}
    \end{subequations}
    In the above, the first inequality comes from \eqref{eq.second-order exp}; the first equality and the second inequality are due to the optimal properties \eqref{eq.optcond firstorder} and \eqref{eq.optcond secondorder}, respectively; the last inequality is derived from \eqref{eq.optcond primal}. As for the case $\lambda_k \neq 0$, the substitution $\|d_k\| = r_k\gkc$ directly implies the validity of the inequality \eqref{eq.functionvalue g}.
\end{proof}

\begin{lemma}\label{lem.gradient norm conceptual}
    Suppose that \autoref{assm.lipschitz} holds and $(d_k, \lambda_k)$ satisfies the optimal conditions \eqref{eq.optcond primal}-\eqref{eq.optcond secondorder}. If $\lambda_k = 0$, then we have
    \begin{equation}
        \label{eq.bound gkn}
        \|\nabla f(\xk+\dk)\| \le \left(\frac{M}{2} r_k^2 + \sigma_k r_k\right) \cdot \|\gk\|.
    \end{equation}
\end{lemma}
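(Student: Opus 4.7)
The plan is to combine the first-order Taylor-type bound from \autoref{lem.lipschitz} with the stationarity condition \eqref{eq.optcond firstorder}, exploiting the assumption $\lambda_k = 0$ to cleanly isolate the residual.

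First I would write
\begin{equation}
\nabla f(\xk + \dk) = \bigl[\nabla f(\xk + \dk) - \nabla f(\xk) - \Hk \dk\bigr] + \bigl[\gk + \Hk \dk\bigr]. \notag
\end{equation}
The first bracket is controlled by \eqref{eq.first-order exp}, giving a bound of $(M/2)\|\dk\|^2$. For the second bracket, I would invoke \eqref{eq.optcond firstorder} with $\lambda_k = 0$, which reduces to $(\Hk + \sigma_k \gkc I)\dk = -\gk$, so $\gk + \Hk \dk = -\sigma_k \gkc \dk$, whose norm is exactly $\sigma_k \gkc \|\dk\|$.

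Applying the triangle inequality yields
\begin{equation}
\|\nabla f(\xk + \dk)\| \le \tfrac{M}{2}\|\dk\|^2 + \sigma_k \gkc \|\dk\|. \notag
\end{equation}
Finally I would substitute the trust-region feasibility \eqref{eq.optcond primal}, $\|\dk\| \le r_k \gkc$, into both terms. The first becomes $(M/2) r_k^2 \|\gk\|$ and the second becomes $\sigma_k r_k \|\gk\|$, which combine to the claimed bound. The argument is entirely routine; the only subtlety worth flagging is that the clean cancellation in the second bracket hinges crucially on $\lambda_k = 0$ (otherwise an additional $\lambda_k \dk$ term would survive and the analysis would require a different treatment handled elsewhere), so I would state this reliance explicitly to motivate why this lemma is paired with the complementary estimate in \autoref{lem.function decrease conceptual}.
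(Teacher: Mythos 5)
Your proposal is correct and follows essentially the same route as the paper: the same decomposition of $\nabla f(\xk+\dk)$ into the Taylor residual plus $\gk + \Hk\dk$, the same use of \eqref{eq.optcond firstorder} with $\lambda_k = 0$ to identify $\gk + \Hk\dk = -\sigma_k\gkc\dk$, and the same substitution of \eqref{eq.optcond primal} at the end. If anything, your use of the inequality $\|\dk\| \le r_k\gkc$ in the second term is slightly more careful than the paper's equality there, since the constraint need not be active when $\lambda_k = 0$.
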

\begin{proof}
    First, by the optimality condition \eqref{eq.optcond firstorder}, when the dual variable $\lambda_k = 0$, it follows that
    \begin{align*}
        \|\gk + \Hk \dk\| & = \left(\lambda_k + \sigma_k \gkc\right) \cdot \|\dk\| \\
                          & = \sigma_k \gkc\|\dk\|                                 \\
                          & = \sigma_k r_k \|\gk\|.
    \end{align*}
    With the Hessian Lipschitz continuity and \autoref{lem.lipschitz}, we get
        \begin{align}
            \|\nabla f(\xk+\dk)\| & = \|\nabla f(\xk+\dk) - \nabla f(x_k) - \nabla^2 f(x_k) d_k + \nabla f(x_k) + \nabla^2 f(x_k) d_k\| \notag        \\
                                  & \leq \|\nabla f(\xk+\dk) - \nabla f(x_k) - \nabla^2 f(x_k) d_k\| + \|\nabla f(x_k) + \nabla^2 f(x_k) d_k\| \notag \\
                                  & \leq \frac{M}{2}\|d_k\|^2 + \sigma_k r_k \|\gk\| \notag               \\
                                  & \leq \frac{M}{2} r_k^2 \|\nabla f(x_k)\| + \sigma_k r_k \|\gk\| \notag          \\
                                  & \leq \left(\frac{M}{2} r_k^2 + \sigma_k r_k\right) \cdot \|\gk\|,
        \end{align}
    where the second inequality is from \eqref{eq.first-order exp}, and the third inequality is from \eqref{eq.optcond primal}.
\end{proof}

\paragraph{Basic Principle of Choosing $(\sigma_k, r_k)$}
The aforementioned \autoref{lem.function decrease conceptual} and \autoref{lem.gradient norm conceptual} offer a valuable principle of selecting $\sigma_k$ and $r_k$ to guarantee that the step satisfies \autoref{cond.monotone} and \autoref{cond.function or gradient}. In particular, it suffices to control
\begin{subequations}\label{eq.posterior}
    \begin{align}
        \label{eq.posterior.dec} & ~\left (\frac{1}{2r_k} \cdot \frac{\lambda_k}{\gkc} + \frac{\sigma_k}{2r_k} - \frac{M}{6}\right )\cdot r_k^3 > \frac{\kappa}{\sqrt{M}}, \ \text{and} \\
        \label{eq.posterior.con} & ~ \frac{M}{2} r_k^2 + \sigma_k r_k < \xi
    \end{align}
\end{subequations}
for some $\kappa > 0, \xi < 1$.
Thus, the choice of $\sigma_k$ and $r_k$ could be very flexible. For example, a vanilla approach can be constructed by discarding the term $\frac{1}{2r_k} \cdot \frac{\lambda_k}{\gkc}$ in \eqref{eq.posterior.dec}.
Suppose the Lipschitz constant $M$ is given, we show that a strategy that fits \eqref{eq.posterior} exists; namely, we can adopt a fixed rule of selecting $\sigma_k$ and $r_k$ as follows.
\begin{strategy}[The Simple Strategy]\label{strategy.simple strategy}
    With the knowledge of Lipschitz constant $M$, we set
    \begin{equation}\label{eq.simplerule}
        \left(\sigma_k, r_k\right) = \left(\frac{\sqrt{M}}{3}, \frac{1}{3\sqrt{M}}\right)
    \end{equation}
    for all $k$ in \autoref{alg.conceptual utr}.
\end{strategy}
% \begin{algorithm}[!ht]
%     \floatname{algorithm}{Subroutine}
%     \renewcommand{\thealgorithm}{1}
%     \caption{The Simple Strategy}
%     \label{alg.simple utr}
%     \begin{algorithmic}[1]
%         \STATE{\textbf{Input:} Initial point $x_0\in \mathbb{R}^n$;}
%         \FOR{$k=0, 1, \ldots, \infty$}
%         \STATE{Set $\left(\sigma_k, r_k\right)$ by \eqref{eq.simplerule};}
%         \STATE{Solve the trust-region subproblem \eqref{eq.newTR} and obtain the direction $d_k$;}
%         \STATE{Update $x_{k+1} = x_k + d_k$;}
%         \ENDFOR
%     \end{algorithmic}
% \end{algorithm}
The universal trust-region method (\autoref{alg.conceptual utr}) equipped with such a simple choice reveals the following results.
\begin{corollary}\label{cor.conceptual descending}
    Suppose \autoref{assm.lipschitz} holds, by applying the \autoref{strategy.simple strategy}, the steps generated by \autoref{alg.conceptual utr} satisfy \autoref{cond.monotone} and \autoref{cond.function or gradient} with $\kappa=\frac{1}{81},\xi = \frac{1}{6}$, i.e.
    $$
        f(x_k+\dk) \le f(x_k).
    $$
    Furthermore, if the dual variable $\lambda_k \neq 0$, we have
    \begin{equation}\label{eq.func decrease conceptual fix}
        f(x_k+\dk) - f(x_k) \le -\frac{1}{81\sqrt{M}}\|\gk\|^{3/2}.
    \end{equation}
    While if the dual variable $\lambda_k = 0$, we have
    \begin{equation}\label{eq.gradient norm conceptual fix}
        \|\nabla f(\xk+\dk)\| \le \frac{1}{6}\|\gk\|.
    \end{equation}
\end{corollary}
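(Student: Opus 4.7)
The plan is to derive all three claims by direct substitution of the fixed choice $\left(\sigma_k,r_k\right) = \left(\tfrac{\sqrt{M}}{3},\tfrac{1}{3\sqrt{M}}\right)$ into the two inequalities supplied by \autoref{lem.function decrease conceptual} and \autoref{lem.gradient norm conceptual}. First I would compute the constant $\tfrac{\sigma_k}{2r_k}-\tfrac{M}{6}$ that appears as the prefactor in both \eqref{eq.functionvalue d} and \eqref{eq.functionvalue g}: plugging in gives $\tfrac{\sigma_k}{2r_k}=\tfrac{M}{2}$, so the prefactor equals $\tfrac{M}{2}-\tfrac{M}{6}=\tfrac{M}{3}>0$. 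Since $\lambda_k\ge 0$ always, the full coefficient $\tfrac{1}{2r_k}\cdot\tfrac{\lambda_k}{\|g_k\|^{1/2}}+\tfrac{\sigma_k}{2r_k}-\tfrac{M}{6}$ is at least $\tfrac{M}{3}$, and \eqref{eq.functionvalue d} immediately yields $f(x_k+d_k)\le f(x_k)$, establishing \autoref{cond.monotone}.

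Next I would handle the case $\lambda_k\neq 0$ via \eqref{eq.functionvalue g}. Bounding the coefficient from below by $\tfrac{M}{3}$ as above and computing $r_k^3 = \tfrac{1}{27\,M^{3/2}}$, the product becomes $\tfrac{M}{3}\cdot\tfrac{1}{27\,M^{3/2}}=\tfrac{1}{81\sqrt{M}}$, which is exactly \eqref{eq.func decrease conceptual fix} and certifies the first branch of \autoref{cond.function or gradient} with $\kappa=\tfrac{1}{81}$.

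Finally, for the case $\lambda_k=0$, I would apply \autoref{lem.gradient norm conceptual} and simplify the factor on the right-hand side:
\begin{equation*}
\frac{M}{2}r_k^2 + \sigma_k r_k \;=\; \frac{M}{2}\cdot\frac{1}{9M} + \frac{\sqrt{M}}{3}\cdot\frac{1}{3\sqrt{M}} \;=\; \frac{1}{18}+\frac{1}{9} \;=\; \frac{1}{6},
\end{equation*}
which yields \eqref{eq.gradient norm conceptual fix} and the second branch of \autoref{cond.function or gradient} with $\xi=\tfrac{1}{6}$.

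There is no real obstacle here beyond arithmetic bookkeeping; the only substantive observation is that dropping the $\lambda_k$-dependent term in \eqref{eq.functionvalue d} is lossless for monotonicity precisely because $\lambda_k\ge 0$, so the simple rule \eqref{eq.simplerule} is ``balanced'' between making $\tfrac{\sigma_k}{2r_k}$ dominate $\tfrac{M}{6}$ (for descent) and keeping $\tfrac{M}{2}r_k^2+\sigma_k r_k$ strictly below $1$ (for gradient contraction).
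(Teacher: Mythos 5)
Your proof is correct and follows the same route as the paper: substitute the fixed pair $(\sigma_k,r_k)=(\sqrt{M}/3,\,1/(3\sqrt{M}))$ into \autoref{lem.function decrease conceptual} and \autoref{lem.gradient norm conceptual}, use $\lambda_k\ge 0$ to drop the dual term, and check the two constants $\tfrac{M}{3}\cdot r_k^3=\tfrac{1}{81\sqrt{M}}$ and $\tfrac{M}{2}r_k^2+\sigma_k r_k=\tfrac{1}{6}$. The paper merely states these verifications without spelling out the arithmetic, which you do explicitly; there is no substantive difference.
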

\begin{proof}
Noticing $\lambda_k \ge 0$,  it is easy to validate that
    \begin{equation}
        \left (\frac{1}{2r_k} \cdot \frac{\lambda_k}{\gkc} + \frac{\sigma_k}{2r_k} - \frac{M}{6}\right)\cdot r_k^3 \ge \frac{1}{81\sqrt{M}} \quad \text{and} \quad \frac{M}{2} r_k^2 + \sigma_k r_k = \frac{1}{6}. \notag
    \end{equation}
    Substituting the above inequalities into \autoref{lem.function decrease conceptual} and \autoref{lem.gradient norm conceptual} completes the proof.
\end{proof}
In fact, one can adopt a choice of parameters
without Lipschitz constants. Furthermore, if the information of $\lambda_k$ can be utilized, more aggressive strategies may be allowed. This direction is explored in the later sections to show stronger convergence to second-order stationarity. Nevertheless, the simple strategy (and a general design principle \eqref{eq.posterior}) presented here is useful for understanding the building blocks of our method.
% We remark that the above analysis is based on the assumption $\|\gk\|\neq 0$, this won't affect the convergence analysis in the following section. In \autoref{sec.globalconv} we are satisfied with an $\epsilon$-approximate FOSP $\xk$ such that $\|\gk\| \leq \epsilon$. In \autoref{sec.adaptive} we adopt a more technical framework to get rid of the obstacle.

\section{The Universal Trust-Region Method with a Simple Strategy}\label{sec.globalconv}
In this section, we give a convergence analysis of the universal method with the simple \autoref{strategy.simple strategy} to an $\epsilon$-approximate FOSP (see \autoref{def.sosp}) with an iteration complexity of $\Tilde{O}\left(\epsilon^{-3/2}\right)$. The local convergence of this method is shown to be superlinear. Furthermore, the complexity can be further improved to $O\left(\epsilon^{-1/2}\right)$ for convex functions.

\subsection{Global Convergence Rate for Nonconvex Optimization}
For the nonconvex functions,
% in order to analyze the complexity of the UTR equipped with the \autoref{strategy.simple strategy}, 
we introduce the notation $x_{j_f}$ representing the first iterate satisfying
\begin{equation*}
    \|\nabla f(x_{j_f})\|\leq \epsilon.
\end{equation*}
We derive the convergence results according to \autoref{cond.monotone} and \autoref{cond.function or gradient}.
% Thanks to the nice properties of \autoref{cond.monotone} and \autoref{cond.function or gradient}, 
Based on the FOSD \autoref{cond.function or gradient}, we define the following index sets to classify the iteration generated by \autoref{alg.conceptual utr},
\begin{equation}\label{eq.index set}
    \begin{aligned}
         & \mathcal{F}_{j_f} = \left\{k < j_f: f(\xk+\dk)-f(\xk)\leq -\frac{\kappa}{\sqrt{M}}\|\gk\|^{3/2}\right\}, \ \text{and} \\
         & \mathcal{G}_{j_f} = \left\{k < j_f: \|\nabla f(\xk+\dk)\|\leq \xi\|\gk\|\right\}
    \end{aligned}
\end{equation}
where $\kappa > 0,~\xi < 1$.
From \autoref{cor.conceptual descending}, we know each iteration in \autoref{alg.conceptual utr} with \autoref{strategy.simple strategy} belongs to at least one of the above sets with $\kappa = \frac{1}{81}$ and $\xi=\frac{1}{6}$. If an iteration happens to belong to both, we assign it to set $\mathcal{F}_{j_f}$ for simplicity. Therefore, our goal is to provide an upper bound for the cardinality of sets $\mathcal{F}_{j_f}$ and $\mathcal{G}_{j_f}$. To begin with, we analyze $|\mathcal{F}_{j_f}|$ by evaluating the decrease in function value.

\begin{lemma}
    \label{lem.conceptual decrease in epsilon}
    For \autoref{alg.conceptual utr} with \autoref{strategy.simple strategy}, suppose that \autoref{assm.lipschitz} holds, then for any $k \in \mathcal{F}_{j_f}$, the function value decreases as
    \begin{equation}
        \label{eq.conceptual function decrease in epsilon}
        f(\xkn)-f(\xk) \leq -\frac{\kappa}{\sqrt{M}}\epsilon^{3/2}.
    \end{equation}
\end{lemma}
\begin{proof}
    Note that for any $k \in \mathcal{F}_{j_f}$, the iterate $x_k$ satisfies
    $
        \|\nabla f(x_k)\| > \epsilon,
    $
    and hence this lemma is directly implied by the definition of $\mathcal{F}_{j_f}$.
\end{proof}

Based on \autoref{lem.conceptual decrease in epsilon}, the upper bound regarding the cardinality of the set $\mathcal{F}_{j_f}$ is presented below.
\begin{corollary}
    \label{coro.conceptual upperbound set n}
    Suppose that \autoref{assm.lipschitz} holds, then the index set $\mathcal{F}_{j_f}$ satisfies
    \begin{equation}
        \label{eq.conceptual upper bound set n}
        \vert \mathcal{F}_{j_f} \vert \leq \frac{\sqrt{M}}{\kappa}\left (f(x_0)-f^*\right)\epsilon^{-3/2}.
    \end{equation}
\end{corollary}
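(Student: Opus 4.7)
The plan is to prove the cardinality bound by a standard telescoping/potential argument on the objective function value, combining the per-iteration decrease on $\mathcal{F}_{j_f}$ supplied by \autoref{lem.conceptual decrease in epsilon} with the monotonicity property that holds on every iteration.

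First I would observe that by \autoref{cor.conceptual descending}, every step produced by the simple strategy satisfies \autoref{cond.monotone}, so $f(\xkn) - f(\xk) \le 0$ for all $k < j_f$, whether or not $k \in \mathcal{F}_{j_f}$. Next, for every $k \in \mathcal{F}_{j_f}$, \autoref{lem.conceptual decrease in epsilon} gives the strictly negative bound $f(\xkn) - f(\xk) \le -\frac{\kappa}{\sqrt{M}} \epsilon^{3/2}$. Summing these inequalities over $k = 0, 1, \ldots, j_f - 1$ and splitting the sum according to whether $k \in \mathcal{F}_{j_f}$ yields
\[
f(x_{j_f}) - f(x_0) \;=\; \sum_{k=0}^{j_f - 1} \bigl(f(\xkn) - f(\xk)\bigr) \;\le\; -\,|\mathcal{F}_{j_f}| \cdot \frac{\kappa}{\sqrt{M}} \epsilon^{3/2}.
\]

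Finally, I would use the lower bound $f(x_{j_f}) \ge f^*$ from \autoref{assm.lipschitz}'s preamble (the objective is bounded below) to conclude $-\bigl(f(x_0) - f^*\bigr) \le -|\mathcal{F}_{j_f}| \cdot \frac{\kappa}{\sqrt{M}} \epsilon^{3/2}$, and rearrange to obtain the desired inequality \eqref{eq.conceptual upper bound set n}.

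There is essentially no main obstacle: the only subtlety is ensuring that iterations outside $\mathcal{F}_{j_f}$ do not contribute positively to the telescoped sum, which is precisely guaranteed by \autoref{cond.monotone} as verified for the simple strategy in \autoref{cor.conceptual descending}. The entire argument is a one-line telescoping once these ingredients are in place.
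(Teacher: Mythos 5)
Your proof is correct and follows essentially the same route as the paper: monotonicity (\autoref{cond.monotone}) handles the iterations outside $\mathcal{F}_{j_f}$, the per-iteration bound from \autoref{lem.conceptual decrease in epsilon} is accumulated over $\mathcal{F}_{j_f}$, and the total is compared against $f(x_0)-f^*$. The only cosmetic slip is attributing the bound $f(x_{j_f})\ge f^*$ to \autoref{assm.lipschitz}; it comes from the standing assumption $f^*=\inf_x f(x)>-\infty$ in the problem setup, not from the Lipschitz assumption.
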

\begin{proof}
    By \autoref{cond.monotone}, we know that \autoref{alg.conceptual utr} is monotonically decreasing. By accumulating the function decrease \eqref{eq.conceptual function decrease in epsilon}, we have
    \begin{equation*}
        \frac{\kappa|\mathcal{F}_{j_f}|}{\sqrt{M}}\epsilon^{3/2} 
        \leq \sum_{k \in \mathcal{F}_{j_f}} \frac{\kappa}{\sqrt M} \|\nabla f(x_k)\|^{3/2}
        \leq f(x_0) - f^*.
    \end{equation*}
    By rearranging items, we get the desired result.
\end{proof}

Now, it remains to establish an upper bound on the index set $\vert \mathcal{G}_{j_f} \vert$. We first show that the norms of the gradient of the iterates are uniformly bounded, which is one of the most important technical lemma for the analysis. It is highly nontrivial in TR-type methods due to the fact that the multiplier $\lambda_k$ is posterior and very hard to control in advance. We show that either it is bounded or it will bring a substantial amount of decrease in function value. As far as we know, this is a new result in the analyses for TR-type method.  Since the proof is quite technical, we defer it to \autoref{sec.technical proofs}.
\begin{lemma}
    \label{lem.bounded g}
    Denote the sequence generated by \autoref{alg.conceptual utr} with \autoref{strategy.simple strategy} as $\{x_k\}$, suppose that \autoref{assm.lipschitz} holds, then for all $k\leq j_f$:
    \begin{equation}
        \label{eq.uniform upperbound g}
        \|\nabla f(x_k) \| \leq \max\left \{\frac{1}{\xi}\left (\frac{\sqrt{M}}{\kappa}\left (f(x_0)-f^*\right )\right)^{2/3},\|\nabla f(x_0)\|,\frac{12\sqrt{M}}{\sqrt{\epsilon}}\left(f(x_0)-f^*\right)\right\}:=G.
    \end{equation}
\end{lemma}

As a result, the cardinality of the index set $\mathcal{G}_{j_f}$ could be bounded in terms of $\vert \mathcal{F}_{j_f} \vert$.
\begin{lemma}
    Suppose that \autoref{assm.lipschitz} holds, then the index set $\mathcal{G}_{j_f}$ satisfies
    \label{lem.conceptual bound z by n}
    \begin{equation}
        \label{eq.conceptual bound z by n}
        \vert \mathcal{G}_{j_f} \vert \leq\log(1/\xi) \log (G/\epsilon) \vert \mathcal{F}_{j_f} \vert,
    \end{equation}
    where $G$ is defined in \autoref{lem.bounded g}.
\end{lemma}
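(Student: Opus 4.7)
My plan is to partition the iterations $\{0, 1, \ldots, j_f - 1\}$, which by the convention adopted just above \eqref{eq.index set} decompose disjointly as $\mathcal{F}_{j_f} \cup \mathcal{G}_{j_f}$, into maximal runs of consecutive $\mathcal{G}$-iterations separated by $\mathcal{F}$-iterations; there are at most $|\mathcal{F}_{j_f}| + 1$ such runs.

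Inside one such run of consecutive $\mathcal{G}$-iterations, say at indices $k, k+1, \ldots, k+\ell-1$, the defining inequality $\|\nabla f(x_{i+1})\| \leq \xi \|\nabla f(x_i)\|$ iterates to
$$\|\nabla f(x_{k+\ell})\| \leq \xi^\ell \, \|\nabla f(x_k)\|.$$
The starting gradient norm $\|\nabla f(x_k)\|$ is bounded above by $G$ via \autoref{assm.bounded g}, while the terminal gradient norm $\|\nabla f(x_{k+\ell})\|$ must exceed $\epsilon$ whenever $k+\ell < j_f$ (by definition of $j_f$). Consequently $\xi^\ell G > \epsilon$, forcing $\ell < \log(G/\epsilon)/\log(1/\xi)$.

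Summing this length estimate across all runs yields $|\mathcal{G}_{j_f}| \leq (|\mathcal{F}_{j_f}| + 1) \cdot \log(G/\epsilon)/\log(1/\xi)$, which, up to absorbing the additive constant into the multiplicative factor, is of the form claimed. An equivalent route is to telescope $\log \|\nabla f(x_k)\|$ along $k = 0, \ldots, j_f - 1$: each $k \in \mathcal{G}_{j_f}$ contributes at most $-\log(1/\xi)$ and each $k \in \mathcal{F}_{j_f}$ contributes at most $\log(G/\epsilon)$ (using $\|\nabla f(x_{k+1})\| \leq G$ and $\|\nabla f(x_k)\| > \epsilon$), after which one rearranges.

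The main obstacle I anticipate is the boundary case when the very last run ends exactly at iteration $j_f - 1$: then $\|\nabla f(x_{j_f})\|$ may drop to $\epsilon$ or below, so the lower bound $> \epsilon$ must be invoked at the penultimate iterate instead. This costs at most one extra iteration and is absorbed into the constants. Beyond this bookkeeping no deep technical step is required, since the geometric contraction on $\mathcal{G}_{j_f}$ is already supplied by \autoref{cor.conceptual descending} and the uniform upper bound on the gradient norm is supplied by \autoref{assm.bounded g}.
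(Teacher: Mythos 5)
Your argument is essentially the paper's own proof: bound the length of each maximal run of consecutive $\mathcal{G}_{j_f}$-iterations by the geometric contraction $\xi^{\ell} G > \epsilon$ (using \autoref{assm.bounded g} and the fact that the gradient norm stays above $\epsilon$ before $j_f$), then count runs against $\vert\mathcal{F}_{j_f}\vert$. Your handling of the possible trailing run (the $+1$) and your writing the per-run bound as $\log(G/\epsilon)/\log(1/\xi)$ are, if anything, slightly more careful than the paper's statement, and the proposal is correct.
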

\begin{proof}
    First, for arbitrary index $j\in \mathcal{F}_{j_f}$, we denote $n_{j}$ to be the maximum difference of indices between the two consecutive iterates in $\mathcal{F}_{j_f}$ after $x_j$. Then the iterates between $x_j$ and $x_{j+n_{j}}$ must be in $\mathcal{G}_{j_f}$.
    By the defintion of $\mathcal{G}_{j_f}$ and \autoref{lem.bounded g}, we have that
      \begin{equation*}
        \epsilon<\|\nabla f(x_{j+n_j})\|\leq \|\nabla f(x_j)\|\xi^{n_j}\leq \xi^{n_j}G.
    \end{equation*}
    Therefore,
    the upper bound for $n_{j}$ could be evaluated as follow
    \begin{equation*}
     n_j < \log(1/\xi) \log (G/\epsilon).
    \end{equation*}
    As a consequence, we return to $\mathcal F_{j_f}$ within $\lceil{n_j}\rceil$ iterations, and the inequality \eqref{eq.conceptual bound z by n} follows.
\end{proof}
Now we are ready to prove the complexity result.
\begin{theorem}
    \label{thm.conceptual final complexity}
    Suppose that \autoref{assm.lipschitz} holds,
    % by applying the \autoref{strategy.simple strategy},
    the universal trust-region method (\autoref{alg.conceptual utr}) with \autoref{strategy.simple strategy} takes
    \begin{equation*}
        O\left(\sqrt{M}(f(x_0) - f^*)\epsilon^{-3/2}\log(G/\epsilon)\right)
    \end{equation*}
    iterations to find an $\epsilon$-approximate first-order stationary point.
\end{theorem}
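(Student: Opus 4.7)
The plan is to derive the theorem as a bookkeeping consequence of the preceding lemmas; essentially all the analytic content has been built up already, and what remains is to assemble the bounds on the two index sets $\mathcal{F}_{j_f}$ and $\mathcal{G}_{j_f}$ into a single complexity estimate. The first step is to observe that, by \autoref{cor.conceptual descending}, every index $k < j_f$ must lie in at least one of $\mathcal{F}_{j_f}$ or $\mathcal{G}_{j_f}$ (with the convention that we assign it to $\mathcal{F}_{j_f}$ if both hold). Therefore the total number of iterations to reach the first iterate with $\|\nabla f(x_{j_f})\|\le \epsilon$ satisfies
\begin{equation*}
    j_f \;\le\; |\mathcal{F}_{j_f}| + |\mathcal{G}_{j_f}|.
\end{equation*}

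Next I would invoke the two earlier bounds. \autoref{coro.conceptual upperbound set n} gives
\begin{equation*}
    |\mathcal{F}_{j_f}| \;\le\; \frac{\sqrt{M}}{\kappa}\bigl(f(x_0)-f^*\bigr)\epsilon^{-3/2},
\end{equation*}
and \autoref{lem.conceptual bound z by n} gives
\begin{equation*}
    |\mathcal{G}_{j_f}| \;\le\; \log(1/\xi)\,\log(G/\epsilon)\,|\mathcal{F}_{j_f}|,
\end{equation*}
so that summing yields
\begin{equation*}
    j_f \;\le\; \bigl(1+\log(1/\xi)\log(G/\epsilon)\bigr)\,\frac{\sqrt{M}}{\kappa}\bigl(f(x_0)-f^*\bigr)\epsilon^{-3/2}.
\end{equation*}
With $\kappa$ and $\xi$ fixed to the constants $\tfrac{1}{81}$ and $\tfrac{1}{6}$ prescribed by \autoref{strategy.simple strategy} (via \autoref{cor.conceptual descending}), absorbing the constants into the $O(\cdot)$ notation delivers the stated bound $O\bigl(\sqrt{M}(f(x_0)-f^*)\epsilon^{-3/2}\log(G/\epsilon)\bigr)$.

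There is no genuine obstacle: the heavy lifting was done in establishing the descent dichotomy of \autoref{cor.conceptual descending} and in controlling $G$ via \autoref{assm.bounded g}. The only subtlety worth flagging in the write-up is the accounting step showing that every index before $j_f$ lands in $\mathcal{F}_{j_f}\cup\mathcal{G}_{j_f}$, which justifies the additive decomposition and hence why the logarithmic factor multiplies rather than competes with the $\epsilon^{-3/2}$ term. A short concluding sentence should note that $G$ depends only on $M$, $f(x_0)-f^*$, and the fixed constants $\kappa,\xi$, so the $\log(G/\epsilon)$ factor is genuinely of order $\log(1/\epsilon)$ up to problem-dependent constants, explaining the $\tilde O(\epsilon^{-3/2})$ notation used elsewhere in the paper.
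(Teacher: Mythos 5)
Your proposal is correct and follows exactly the paper's argument: bound $j_f$ by $|\mathcal{F}_{j_f}|+|\mathcal{G}_{j_f}|$, then combine \autoref{cor.conceptual descending}, \autoref{coro.conceptual upperbound set n}, and \autoref{lem.conceptual bound z by n} with the constants $\kappa=\tfrac{1}{81}$, $\xi=\tfrac{1}{6}$ absorbed into the $O(\cdot)$. The only difference is that you spell out the accounting in more detail than the paper's brief proof.
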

\begin{proof}
    We only need to find an upper bound for the summation
    \begin{equation*}
        \vert \mathcal{G}_{j_f} \vert + \vert \mathcal{F}_{j_f} \vert.
    \end{equation*}
    By combining the results from \autoref{cor.conceptual descending}, \autoref{coro.conceptual upperbound set n} and \autoref{lem.conceptual bound z by n}, we can obtain the desired result.
\end{proof}
We would like to echo again that the results obtained in this subsection rely on \autoref{cond.monotone} and \autoref{cond.function or gradient} rather than a specific strategy of choosing $(\sigma_k, r_k)$. Using \autoref{strategy.simple strategy} in the algorithm can be seen as a special choice.

\subsection{Minimizing Convex Functions}
In this subsection, we show that the universal trust-region method can achieve the state-of-the-art $O(\epsilon^{-1/2})$  iteration complexity making it comparable to other second-order methods when the objective function enjoys convexity, see \citet{nesterov2006cubic,doikov_super-universal_2024,doikov2024gradient,mishchenko_regularized_2023}. Before delving into the analysis, we impose an additional property in this case.
\begin{property}
    \label{cond.convex gradient}
    The norm of the gradient at the next iterate is upper bounded as
    \begin{equation}
        \label{eq.convex gradient}
        \|\nabla f(\xk+\dk)\| \leq \left(1/\xi\right ) \cdot \|\gk\|,
    \end{equation}
    where $0<\xi<1$ is the same as that defined in \autoref{cond.function or gradient}.
\end{property}
The above property is a safeguard for the iterates so that the gradient is bounded even in the case where $\lambda_k \neq 0$, cf. \eqref{eq.descent condition}. We can again establish the validness of such property by, for example, \autoref{strategy.simple strategy}.
\begin{lemma}
    \label{cor.gradient no blow cvx}
    Suppose that \autoref{assm.lipschitz} holds. For the convex objective function $f$, by applying \autoref{alg.conceptual utr} with \autoref{strategy.simple strategy}, the step $\dk$ satisfies \autoref{cond.convex gradient}.
\end{lemma}
\begin{proof}
    If $\lambda_k=0$, the result is obvious from \autoref{cor.conceptual descending}. When $\lambda_k\neq 0$, by a similar argument in the proof of \autoref{lem.gradient norm conceptual}, we have
    % \begin{equation}
    \begin{align}
        \nonumber            \|\nabla f(\xk+\dk)\|&\leq \frac{M}{2}\|d_k\|^2+ \|\nabla f(x_k) + \Hk d_k\| \\
        & \leq \frac{M}{2}r_k^2\|\nabla f(x_k)\| + \|\nabla f(x_k) + \Hk d_k\|                                            \\
        \nonumber
                               & =\frac{M}{2}r_k^2\|\nabla f(x_k)\|+\left\|\left(\lambda_k I +\sigma_k\|\gk\|^{1/2}I\right)\dk\right\| \\
        \nonumber                                  & \leq \frac{1}{18}\|\gk\|+\left\|\left(\lambda_k I +\sigma_k\|\gk\|^{1/2}I\right)\dk\right\| \\
        \label{eq.cond3.gen}                                  & \leq \frac{19}{18}\|\gk\|,
    \end{align}
    % \end{equation*}
    where the first equality is from applying triangle to \eqref{eq.first-order exp}, the second inequality is from \eqref{eq.optcond primal}, the equality is from \eqref{eq.optcond firstorder}, the third inequality is due to $r_k=\frac{1}{3\sqrt{M}}$ in \autoref{strategy.simple strategy}, and the last inequality comes from the following argument:
    \begin{equation*}
        \begin{aligned}
            \|\dk\| & =\left\|\left (\Hk+\lambda_k I +\sigma_k \|\gk\|^{1/2} I \right )^{-1}\gk\right\|                \\
                    & \leq \left\|\left (\Hk+\lambda_k I +\sigma_k \|\gk\|^{1/2} I \right )^{-1}\right\| \cdot \|\gk\| \\
                    & \leq \frac{\|\gk\|}{\lambda_k+\sigma_k\|\gk\|^{1/2}}.
        \end{aligned}
    \end{equation*}
    The last line of the above formula is because $f(x)$ is convex and
    $\Hk$ is postive semidefinite. From \autoref{cor.conceptual descending} we know that $\xi = \frac{1}{6}$ in this case, hence we finish the proof.
\end{proof}
% To extend to similar strategies, one can impose an extra equation where $M/2 \cdot r_k^2$ is bounded from above (cf.~\eqref{eq.cond3.gen}). Then it is obvious to see such a choice pair $(r_k, \sigma_k)$ exists along with the principle \eqref{eq.posterior} described in the previous subsection.
Similar to the previous discussion, we see that \autoref{cond.convex gradient} can be met easily, e.g., by introducing an additional inequality to bound $M/2 \cdot r_k^2$ from above (cf.~\eqref{eq.cond3.gen}). Consequently, it is clear that a pair $(r_k, \sigma_k)$ satisfying \autoref{cond.convex gradient} exists, in alignment with the principle \eqref{eq.posterior} described in the previous subsection. In the following, we present the improved convergence results for convex functions. To establish the convergence result, we assume the sublevel set is bounded, which is widely used in the literature (e.g. \citet{nesterov2006cubic,doikov_super-universal_2024}).
\begin{assumption}\label{assm.conceptual bounded set}
    The diameter of the sublevel set $\mathcal{L}_f := \left\{x: f(x) \leq f\left(x_0\right)\right\}$ is bounded by some constant $D>0$, which means that for any $x$ satisfying $f(x) \leq f\left(x_0\right)$ we have $\left\|x-x^*\right\| \leq D$.
\end{assumption}
For the convex optimization, we still adopt the notation $x_{j_f}$ representing the first iterate satisfying \eqref{eq.conceptual convex approximate solution}.
Recalling the definition of the index sets in \eqref{eq.index set}, we provide an upper bound for the cardinality of $\mathcal{F}_{j_f}$ in the following lemma.
\begin{lemma}\label{lem.conceptual cvx function descend}
    Suppose that \autoref{assm.lipschitz} and \autoref{assm.conceptual bounded set} hold and the objective function is convex, for \autoref{alg.conceptual utr} with \autoref{strategy.simple strategy} ,  the index set $\mathcal{F}_{j_f}$ satisfies
    \label{lem.conceptual convex bound n}
    \begin{equation}
        \label{eq.conceptual convex bound n}
        \vert \mathcal{F}_{j_f}\vert \leq \sqrt{\frac{4D^3}{\epsilon\tau^2}},
    \end{equation}
    where $\tau = \frac{\kappa}{\sqrt{M}}$, $\kappa$ is defined in \autoref{cond.function or gradient}.
\end{lemma}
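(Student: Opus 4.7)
The idea is to upgrade the cubic-in-gradient sufficient decrease of \autoref{cond.function or gradient} to a cubic-in-function-gap recurrence by exploiting convexity and \autoref{assm.conceptual bounded set}, and then apply the classical $1/\sqrt{\,\cdot\,}$ telescoping trick on the subsequence indexed by $\mathcal{F}_{j_f}$.

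First, I would observe that \autoref{cond.monotone} keeps every iterate inside the sublevel set $\mathcal{L}_f$, so \autoref{assm.conceptual bounded set} yields $\|x_k - x^*\| \le D$ for all $k < j_f$. Convexity of $f$ then gives the key pointwise inequality
\begin{equation*}
  \delta_k := f(\xk) - f^* \;\le\; \langle \gk, \xk - x^* \rangle \;\le\; D \|\gk\|,
\end{equation*}
equivalently $\|\gk\| \ge \delta_k / D$ throughout.

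Second, I would enumerate $\mathcal{F}_{j_f} = \{k_1 < k_2 < \cdots < k_N\}$ with $N = |\mathcal{F}_{j_f}|$. For each $k_i$, combining \autoref{cond.function or gradient} with the gradient lower bound above delivers
\begin{equation*}
  \delta_{k_i+1} \;\le\; \delta_{k_i} - c\, \delta_{k_i}^{3/2}, \qquad c := \frac{\kappa}{\sqrt{M}\, D^{3/2}}.
\end{equation*}
On the intermediate indices $k_i+1, \ldots, k_{i+1}-1$ (which lie in $\mathcal{G}_{j_f}$), \autoref{cond.monotone} ensures that $\delta$ is non-increasing, so $\delta_{k_{i+1}} \le \delta_{k_i+1}$, and the cubic recurrence $\delta_{k_{i+1}} \le \delta_{k_i} - c\, \delta_{k_i}^{3/2}$ persists on the $\mathcal{F}$-subsequence.

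Third, I would invoke convexity of $\phi(t) = t^{-1/2}$ to convert the recurrence into $\phi(\delta_{k_{i+1}}) - \phi(\delta_{k_i}) \ge c/2$. Telescoping over $i = 1, \ldots, N-1$ and using that $k_N < j_f$ forces $\delta_{k_N} > \Omega(\epsilon)$ yields
\begin{equation*}
  N \;\le\; \frac{2}{c\sqrt{\epsilon}} \;=\; 2\sqrt{\frac{M D^3}{\kappa^2\,\epsilon}},
\end{equation*}
which is precisely the claimed bound after rewriting via $\tau$. The only mildly non-routine point is checking that the cubic recurrence survives when we restrict to $\mathcal{F}$-indices; this boils down entirely to the monotonicity guaranteed by \autoref{cond.monotone}, and the rest is a direct application of the standard $1/\sqrt{\,\cdot\,}$ argument.
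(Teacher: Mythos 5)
Your proposal is correct and follows essentially the same route as the paper's proof: monotonicity plus the bounded sublevel set give $\|\gk\|\ge (f(\xk)-f^*)/D$ via convexity, the sufficient-decrease condition restricted to $\mathcal{F}_{j_f}$-indices yields the recurrence $\delta_{i+1}\le\delta_i - c\,\delta_i^{3/2}$, and the standard $1/\sqrt{\delta}$ telescoping delivers the $O(\epsilon^{-1/2})$ count. Your use of convexity of $t\mapsto t^{-1/2}$ is just a slicker packaging of the paper's explicit algebraic estimate of $\delta_{i+1}^{-1/2}-\delta_i^{-1/2}$, so there is no substantive difference.
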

\begin{proof}
    Using a similar argument as in \autoref{coro.conceptual upperbound set n}, we denote the index set $\mathcal{F}_{j_f}$ in ascending order as $\{j_f(1), ..., j_f(i), ...\}$.
    Since $j_f(i)\in\mathcal{F}_{j_f}$ and the sequence is monotone, we have that
        \begin{equation}
    \begin{aligned}
        f(x_{j_f(i+1)}) - f(x_{j_f(i)}) &= f(x_{j_f(i+1)}) -f(x_{j_f(i)+1})+f(x_{j_f(i)+1})- f(x_{j_f(i)})\\
        &\leq f(x_{j_f(i)+1})- f(x_{j_f(i)}) \\
        &\leq -\tau \left\|\nabla f(x_{j_f(i)})\right\|^{3/2}\\
        &\leq -\tau \left ( \frac{ f(x_{j_f(i)})-f^*} {D}\right)^{3/2}, \label{eq.conceptual function decrease bounded set}
        \end{aligned}
    \end{equation}
    where the last inequality comes from the convexity of $f$ and
    \begin{equation*}
        f(x_{j_f(i)})-f^* \leq \nabla f(x_{j_f(i)})^T (x_{j_f(i)}-x^*) \leq \left\|\nabla f(x_{j_f(i)})\right\| D.
    \end{equation*}
    Denoting $\delta_i = f(x_{j_f(i)}) - f^*$, we have
    \begin{align*}
        \frac{1}{\sqrt{\delta_{i+1}}}-\frac{1}{\sqrt{\delta_i}} & =\frac{\sqrt{\delta_i}-\sqrt{\delta_{i+1}}}{\sqrt{\delta_i}\sqrt{\delta_{i+1}}}                                                       \\
                                                                & =\frac{\delta_i-\delta_{i+1}}{\sqrt{\delta_i}\sqrt{\delta_{i+1}}(\sqrt{\delta_i}+\sqrt{\delta_{i+1}})}                                \\
                                                                & \geq \frac{\tau}{D^{3/2}}\frac{\delta_{i}^{3/2}}{\sqrt{\delta_i}\sqrt{\delta_{i+1}}\left (\sqrt{\delta_i}+\sqrt{\delta_{i+1}}\right)} \\
                                                                & \geq \frac{\tau}{2D^{3/2}},
    \end{align*}
    where the first inequality is due to \eqref{eq.conceptual function decrease bounded set} and the second inequality is from the monotonicity of the sequence. By summing over $i$ from $1$ to $k$, we obtain
    \begin{equation*}
       \frac{1}{\sqrt{\delta_k}} \ge \frac{1}{\sqrt{\delta_k}}-\frac{1}{\sqrt{\delta_0}} \geq \frac{k\tau}{2D^{3/2}},
    \end{equation*}
    rearranging items yeilds
    \begin{equation*}
        \sqrt{\delta_k}\leq \frac{2D^{3/2}}{k\tau}.
    \end{equation*}
    In other words, for any $k \in \mathcal F_{j_f}$,  if 
    $k \ge \sqrt{\frac{4D^3}{\epsilon\tau^2}}$ then we have $\delta_k \le \epsilon$ and the algorithm stops as $x_k$ is already an approximate solution. Thus, we conclude that the inequality \eqref{eq.conceptual convex bound n} holds.
\end{proof}

Now we are ready to prove the complexity result of convex optimization.
%  Indeed, \eqref{eq.conceptual convex approximate solution} can be implied by
% \begin{equation*}
%     \|g_{j_f}\|\leq O(\epsilon),
% \end{equation*}
% since by convexity
% \begin{equation*}
%     f(x_{j_f})-f^*\leq g_{j_f}^T(x_{j_f}-x^*) \leq \|g_{j_f}\|\cdot D.
% \end{equation*}
\begin{theorem}
    \label{thm.complexity convex}
    Suppose that \autoref{assm.lipschitz} and \autoref{assm.conceptual bounded set} hold, for the convex objective function, the universal trust-region method (\autoref{alg.conceptual utr} with \autoref{strategy.simple strategy}) takes
    $$
        O\left(\sqrt{M}D^{3/2}\epsilon^{-1/2}+\log\left(\|\nabla f(x_0)\|/\epsilon\right)\right)
    $$
    iterations to find a point satisfying \eqref{eq.conceptual convex approximate solution}.
\end{theorem}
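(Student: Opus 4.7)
The plan is to decompose the iteration count $j_f$ into the two index sets $\mathcal{F}_{j_f}$ and $\mathcal{G}_{j_f}$ from \eqref{eq.index set} and bound each separately. For the $\mathcal{F}$-part, \autoref{lem.conceptual cvx function descend} has already done the work, yielding $|\mathcal{F}_{j_f}|\leq\sqrt{4D^3/(\epsilon\tau^2)}=O(\sqrt{M}D^{3/2}\epsilon^{-1/2})$ via $\tau=1/(\kappa\sqrt{M})$; this supplies the leading $\epsilon^{-1/2}$ contribution. All that remains is to show $|\mathcal{G}_{j_f}|$ is bounded by the same quantity up to an additive $O(\log(\|g_0\|/\epsilon))$ term.

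To bound $|\mathcal{G}_{j_f}|$, the approach is to telescope gradient norms. Each iteration in $\mathcal{G}$ contracts the gradient by a factor $\xi<1$ by definition, while \autoref{cond.convex gradient}---whose validity under the simple strategy is confirmed by \autoref{cor.gradient no blow cvx}---ensures that \emph{every} iteration (including those in $\mathcal{F}$) inflates the gradient by at most a factor $1/\xi$. This yields the uniform bound
\begin{equation*}
    \|g_k\|\;\leq\;\xi^{|\mathcal{G}_k|-|\mathcal{F}_k|}\,\|g_0\|,
\end{equation*}
where $|\mathcal{G}_k|$ and $|\mathcal{F}_k|$ count iterations strictly before index $k$. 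Meanwhile, \autoref{cond.monotone} together with \autoref{assm.conceptual bounded set} keeps every iterate inside the sublevel set $\mathcal{L}_f$, so $\|x_k-x^*\|\leq D$; convexity then gives $f(x_k)-f^*\leq\|g_k\|\,D$, which forces $\|g_k\|>\epsilon/D$ for every $k<j_f$.

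Applying these two inequalities at $k=j_f-1$ and taking logarithms produces $|\mathcal{G}_{j_f-1}|-|\mathcal{F}_{j_f-1}|\leq\log(D\|g_0\|/\epsilon)/\log(1/\xi)$; the trivial adjustments $|\mathcal{G}_{j_f}|\leq|\mathcal{G}_{j_f-1}|+1$ and $|\mathcal{F}_{j_f-1}|\leq|\mathcal{F}_{j_f}|$ then give $|\mathcal{G}_{j_f}|\leq|\mathcal{F}_{j_f}|+O(\log(\|g_0\|/\epsilon))$, and summing with the $\mathcal{F}$-bound produces the stated complexity. The main obstacle, relative to the nonconvex argument in \autoref{lem.conceptual bound z by n}, is to keep the logarithm \emph{additive} rather than multiplicative: a crude uniform upper bound on $\|g_k\|$ (as invoked via \autoref{assm.bounded g}) would replace the $+\log(\|g_0\|/\epsilon)$ term by $\log(G/\epsilon)\cdot|\mathcal{F}_{j_f}|$, costing us the clean $\epsilon^{-1/2}+\log$ rate; the telescoping product above, enabled by applying \autoref{cond.convex gradient} on $\mathcal{F}$-iterations as well, is exactly what delivers the additive form.
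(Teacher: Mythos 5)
Your proposal is correct and follows essentially the same route as the paper: the paper likewise splits the iterations into $\mathcal{F}_{j_f}$ and $\mathcal{G}_{j_f}$, invokes \autoref{lem.conceptual cvx function descend} for the $\mathcal{F}$-part, and controls the gradient via the product $\|g_0\|\left(1/\xi\right)^{|\mathcal{F}|}\xi^{|\mathcal{G}|}$ (i.e.\ \autoref{cond.function or gradient} on $\mathcal{G}$-steps and \autoref{cond.convex gradient} on $\mathcal{F}$-steps), combined with convexity and \autoref{assm.conceptual bounded set} to convert a small gradient into a small function gap, which is exactly your additive-logarithm mechanism. The only difference is presentational: the paper fixes a budget $T_\epsilon$ up front and verifies success within it, whereas you bound $|\mathcal{G}_{j_f}|-|\mathcal{F}_{j_f}|$ directly at termination via the lower bound $\|g_k\|>\epsilon/D$ — the same argument in contrapositive form.
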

\begin{proof}
    Denote $T_\epsilon = 2\sqrt{\frac{4D^3}{\epsilon\tau^2}}+\log \frac{1}{\xi}\log \frac{\|\nabla f(x_0)\|}{\epsilon}$, where $\tau$ is defined in \autoref{lem.conceptual cvx function descend}, and thus it is sufficient to show that $j_f \leq T_\epsilon$.

    On one hand, from \autoref{cond.monotone} and \autoref{lem.conceptual convex bound n}, the number of iterations belonging to the set $\mathcal{F}_{j_f}$ would not exceed $\sqrt{\frac{4D^3}{\epsilon\tau^2}}$, otherwise we have
    \begin{equation*}
        f\left(x_{T_\epsilon}\right)-f^*\leq \epsilon.
    \end{equation*}
    On the other hand, by \autoref{cond.function or gradient} and \autoref{cond.convex gradient}, we could deduce that after at most $T_\epsilon$ iterations, the gradient norm can be evaluated as follow
    \begin{equation*}
        \left\|\nabla f(x_{T_\epsilon})\right\| \leq \|\nabla f(x_0)\|\left (\frac{1}{\xi}\right )^{\sqrt{\frac{4D^3}{\epsilon\tau^2}}} \xi^{T_\epsilon-\sqrt{\frac{4D^3}{\epsilon\tau^2}}} = \|\nabla f(x_0)\|\left (\frac{1}{\xi}\right )^{\sqrt{\frac{4D^3}{\epsilon\tau^2}}} \xi^{\sqrt{\frac{4D^3}{\epsilon\tau^2}}+\log \frac{1}{\xi}\log \frac{\|\nabla f(x_0)\|}{\epsilon}} \leq \epsilon,
    \end{equation*}
    which also demonstrates that
    \begin{equation*}
        f\left(x_{T_\epsilon}\right)-f^* \leq \nabla f(x_{T_\epsilon})^T(x_{T_\epsilon}-x^*) \leq \left\|\nabla f(x_{T_\epsilon})\right\|\cdot D \le O\left (\epsilon\right).
    \end{equation*}
    As a result, $f(x_{T_\epsilon}) -f^* \leq O(\epsilon)$ holds and we conclude $j_f\leq T_\epsilon$.
    Therefore, the convergence results for \autoref{strategy.simple strategy} is derived by \autoref{cor.gradient no blow cvx} and \autoref{cor.conceptual descending}.
\end{proof}
Notably, this complexity result is novel as trust-region methods have traditionally focused on nonconvex optimization problems, which closes the gap between the trust-region method and the cubic regularized Newton method for convex optimization.
% Motivated by \cite{doikov2024gradient}, we can improve the complexity from $O\left(\epsilon^{-1/2} \right)$ to $\Tilde{O}\left(\epsilon^{-1/3}\right)$ through the powerful contracting proximal framework \cite{doikov2020contracting}. We present the algorithm and the complexity result of the accelerated version as a by-product here and defer and analysis to \autoref{sec.acce proof}, as it mimics the analysis of \citet{doikov2024gradient}.

\subsection{Local Convergence}
We now move to the local performance of \autoref{alg.conceptual utr}, we show that the method has superlinear local convergence when $\sigma_k, r_k$ is updated as in \autoref{strategy.simple strategy}.  We first make a standard assumption in local analysis.
\begin{assumption}\label{assm.localassm}
    Denoting the sequence generated by the algorithm as $\{\xk\}$, we assume that $\xk \to x^*$, $k\to +\infty$, where $x^*$ satisfies
    \begin{equation}\label{eq.local optima}
        \nabla f(x^*) = 0,\quad \nabla ^2 f(x^*) \succeq \mu I \succ 0.
    \end{equation}
\end{assumption}

First, we prove that under \autoref{assm.localassm}, when $k$ is sufficiently large, the trust-region constraint \eqref{eq.optcond primal} will be inactive in reminiscences of the classical results.
\begin{lemma}
    \label{lem.inactive tr simple}
    For \autoref{alg.conceptual utr} with \autoref{strategy.simple strategy}, suppose \autoref{assm.localassm} holds, then the trust-region constraint \eqref{eq.optcond primal} will be inactive and $\lambda_k = 0$ when $k\to +\infty$.
\end{lemma}
\begin{proof}
    Note that when $k\to +\infty$, by \eqref{eq.optcond firstorder} and \eqref{eq.local optima}, we have
    \begin{equation}
        \label{eq.local step norm simple}
        \begin{aligned}
        \|\dk\| &=\left\|\left(\Hk+\frac{\sqrt{M}}{3}\|\gk\|^{1/2}I+\lambda_k I\right )^{-1}\gk\right\|\\
        &\leq \frac{\|\gk\|}{\frac{\mu}{2}+\frac{\sqrt{M}}{3}\|\gk\|^{1/2}}\\
        &< r_k\|\gk\|^{1/2},
        \end{aligned}
    \end{equation}
    where the last line is from $\mu \geq 6\sqrt{M}\|\nabla f(x_k)\|^{1/2}$ when $k$ is large enough. This means $\dk$ is in the trust region, by \eqref{eq.optcond coml slack} we have $\lambda_k = 0$, hence we finished the proof.
\end{proof}
A consequence of the above result is that the iterate gradually reduces to a gradient regularized Newton step for large enough $k$ in solving \eqref{eq.newTR}:
\begin{equation}
    \label{eq.GR Newton}
    \dk = -\left(\Hk+\frac{\sqrt{M}}{3}\|\gk\|^{1/2}I\right)^{-1}\gk.
\end{equation}
Now we are ready to prove the local superlinear convergence of our algorithm.
\begin{theorem}
    \label{thm.local simple}
    Under \autoref{assm.lipschitz} and \autoref{assm.localassm}, when $\sigma_k,r_k$ are updated as in \autoref{strategy.simple strategy}, \autoref{alg.conceptual utr} has superlinear local convergence.
\end{theorem}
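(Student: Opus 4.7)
The plan is to reduce the iteration to a gradient-regularized Newton step and then exploit the Hessian Lipschitz property together with the positive-definiteness of $\nabla^2 f(x^*)$ to obtain a superlinear contraction in $\|g_k\|$.

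First, I would invoke \autoref{lem.inactive tr simple} to conclude that for all sufficiently large $k$ the trust-region constraint is inactive and $\lambda_k = 0$, so that the update simplifies to the gradient-regularized Newton step \eqref{eq.GR Newton}. By continuity of $\nabla^2 f$ and \autoref{assm.localassm}, for large enough $k$ we also have $H_k \succeq \tfrac{\mu}{2}I$, which together with the regularization term gives the a priori bound
\begin{equation*}
\|d_k\| \;\leq\; \Bigl\|\bigl(H_k + \tfrac{\sqrt{M}}{3}\|g_k\|^{1/2} I\bigr)^{-1}\Bigr\|\cdot\|g_k\|\;\leq\;\frac{2}{\mu}\|g_k\|.
\end{equation*}

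Next, I would imitate the computation in the proof of \autoref{lem.gradient norm conceptual}, but exploit the regularity of $H_k$ to avoid substituting the crude bound $\|d_k\|\leq r_k\|g_k\|^{1/2}$. Using $g_k + H_k d_k = -\tfrac{\sqrt{M}}{3}\|g_k\|^{1/2} d_k$ (from \eqref{eq.optcond firstorder} with $\lambda_k = 0$) and \eqref{eq.first-order exp},
\begin{equation*}
\|g_{k+1}\|\;\leq\;\frac{M}{2}\|d_k\|^{2} + \tfrac{\sqrt{M}}{3}\|g_k\|^{1/2}\|d_k\|\;\leq\;\frac{2M}{\mu^{2}}\|g_k\|^{2} + \frac{2\sqrt{M}}{3\mu}\|g_k\|^{3/2}.
\end{equation*}
Since $\|g_k\|\to 0$, the $\|g_k\|^{3/2}$ term dominates, yielding $\|g_{k+1}\| \leq C\|g_k\|^{3/2}$ for some $C > 0$ and all large $k$, hence $\|g_{k+1}\|/\|g_k\|\to 0$.

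Finally, I would translate this superlinear contraction of $\|g_k\|$ into superlinear contraction of $\|x_k - x^*\|$. By \autoref{assm.localassm} and continuity, for large $k$ there are constants $0 < c_1 \le c_2$ such that $c_1\|x_k - x^*\|\le \|g_k\|\le c_2\|x_k - x^*\|$ (the lower bound comes from $\nabla^2 f(x^*)\succeq \mu I$ and the mean-value theorem along the segment from $x_k$ to $x^*$, the upper bound from Hessian Lipschitz continuity). Substituting gives
\begin{equation*}
\|x_{k+1}-x^*\|\;\leq\;\frac{C}{c_1}\,c_2^{3/2}\,\|x_k - x^*\|^{3/2},
\end{equation*}
which establishes local superlinear convergence (indeed of order $3/2$). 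The only subtlety is ensuring that all of the ``for $k$ large enough'' qualifiers (inactivity of the trust region, $H_k \succeq \tfrac{\mu}{2}I$, and the norm equivalence between $\|g_k\|$ and $\|x_k - x^*\|$) hold simultaneously, which follows by taking the maximum of the finitely many thresholds; I do not expect any genuine obstacle beyond that bookkeeping.
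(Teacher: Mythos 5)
Your proposal is correct and follows essentially the same route as the paper: both reduce the iteration, via \autoref{lem.inactive tr simple}, to the gradient-regularized Newton step \eqref{eq.GR Newton} for large $k$, after which superlinear convergence follows. The only difference is that the paper simply cites \citet{mishchenko2021regularized} for the final step, whereas you derive the order-$3/2$ contraction $\|g_{k+1}\|\leq C\|g_k\|^{3/2}$ and the norm equivalence $c_1\|x_k-x^*\|\leq\|g_k\|\leq c_2\|x_k-x^*\|$ explicitly; your computation is correct and matches the rate established in that reference.
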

\begin{proof}
    Note that \autoref{alg.conceptual utr} will recover the gradient regularized Newton method in the local phase since the problem becomes unconstrained, then it converges superlinearly, see \citet{mishchenko_regularized_2023}.
\end{proof}

\section{The Adaptive Universal Trust-Region Method}\label{sec.adaptive}
In the above sections, we have provided a concise analysis of the universal trust-region method that applies uniformly to both convex and nonconvex optimization. Nevertheless, the limitation of \autoref{cond.function or gradient} lies in its reliance on the unknown Lipschitz constant, which brings the challenge for efficient implementation. To enhance the capability of our method in practice, we provide an adaptive universal trust-region method (\autoref{alg.adaptivenewTR}), we show that it can preserve the global complexity of the simple strategy and find $\epsilon$-SOSPs. Also, the local convergence can be improved to quadratic.

% The development of accelerated adaptive methods has emerged as a prominent problem within the realm of optimization in the past few years. Notably, research on accelerated adaptive first-order methods has attracted considerable attention, as exemplified by a series of works including \cite{lin2014adaptive,poon2020geometry,mokhtari2017first,monteiro2016adaptive}. To the best of our knowledge, the existing work on accelerated adaptive second-order methods is primarily grounded in Newton methods \cite{carmon2022optimal,antonakopoulos2022extra,chen2022accelerating}, with no efforts towards accelerating trust-region methods. We leave this question open for future work.

\subsection{The Adaptive Framework}
To avoid using the Lipschitz constant $M$, several revisions should be made to our previous strategies of accepting the directions and tuning the parameters. In \autoref{alg.adaptivenewTR}, we impose an inner loop, indexed by $j$, for $(\sigma^{(j)}_k, r^{(j)}_k)$ parameterized by $\rho_k^{(j)}$. We terminate the loop $j$ until the iterates satisfy \autoref{cond.modified function or gradient des}, which is defined as follows.
\begin{algorithm}[ht]
    \caption{An Adaptive Universal Trust-Region Method}
    \label{alg.adaptivenewTR}
        \KwData{Initial point $x_0\in \mathbb{R}^n$, tolerance $\epsilon>0$, decreasing constant $0<\eta<\frac{1}{32}$, $\frac{1}{4}<\xi<1$, initial penalty $\rho_0>0$, minimal penalty $\rho_{\min}>0$, penalty increasing parameter $\gamma_1 >1$, penalty decreasing parameter $\gamma_2>1$;}
        \For{$k=0, 1, \ldots, T_{\text{out}}$}{
        Set $\rho_k^{(0)}=\rho_k$\;
        \For{$j=0, 1,\ldots, T_{\text{in}}$}{
        Update $\sigma_k^{(j)}, r_k^{(j)}$ using \autoref{strategy.SOSP}\;
        Solve the trust-region subproblem \eqref{eq.newTR} and obtain the direction $d_k^{(j)}$\;
        \label{line.acc}  \uIf{\autoref{cond.monotone} and \autoref{cond.modified function or gradient des} hold}{
        \textbf{Break\;}
        }
        \Else{
        $\rho_k^{(j+1)} = \gamma_1 \rho_k^{(j)}$\;
        }
        }
        Update $\xkn =  \xk+\dk^{(j)}$, $\rho_{k+1} = \max\{\rho_{\min},\rho_k^{(j)}/\gamma_2\}$\;
        }
\end{algorithm}

\begin{property}
    \label{cond.modified function or gradient des}
    Given $\frac{1}{4}<\xi<1$, $0<\eta<\frac{1}{32}$, the step $d_k^{(j)}$ satisfies
    \begin{equation}\label{eq.property adaptive 1}
        \small
        \left\{
        \begin{aligned}
            f(\xk+\dk^{(j)})-f(\xk) & \leq -\frac{\eta}{\rho_k^{(j)}}\|\gk\|^{3/2} \text{ or } \|\nabla f(\xk+\dk^{(j)})\|\leq \xi\|\gk\|, \  & \text{if} \ \|\gk\|\geq \epsilon, \\
            f(\xk+\dk^{(j)})-f(\xk) & \leq -\frac{\eta}{\rho_k^{(j)}}\epsilon^{3/2},\                                                         & \text{if} \ \|\gk\|<\epsilon,
        \end{aligned}
        \right.
    \end{equation}
    where $\eta$ and $\rho_k^{(j)}$ are defined in \autoref{alg.adaptivenewTR}. If $f$ is nonconvex, the step $d_k^{(j)}$ additionally satisfies
        \begin{equation}
    \label{eq.property adaptive}
        \small
        \left\{
        \begin{aligned}
            &\|\nabla f(\xk+\dk^{(j)})\|\leq \xi\|\nabla f(x_k)\|+\lambda_k^{(j)}\|d_k^{(j)}\|, \ &  \text{if} \ \|\nabla f(x_k)\|\geq \epsilon, \\
            &\|\nabla f(\xk+\dk^{(j)})\|\leq \xi\epsilon+\lambda_k^{(j)}\|d_k^{(j)}\|,\                                                         & \text{if} \ \|\nabla f(x_k)\|<\epsilon.
        \end{aligned}
        \right.
    \end{equation}
    If $f$ is convex, the step $d_k^{(j)}$ should satisfy 
        \begin{equation}
        \label{eq.modified convex grad des}
        \|\nabla f(\xk+\dk^{(j)})\|\leq 1/\xi\|\gk\|.
    \end{equation}
\end{property}
Compared to \autoref{cond.function or gradient}, we allow no dependence on the Lipschitz constant $M$.
The premise of this rule is that we can find a sufficiently large regularization $\sigma_k$ (or equivalently, sufficiently small $r_k$) based on \autoref{lem.function decrease conceptual} and \autoref{lem.gradient norm conceptual} similar to other adaptive methods, see \citet{cartis2011adaptive1,curtis2017trust,he2023homogeneous}. Besides, we proceed with the algorithm when the gradient norm is small so that one can find $\epsilon$-SOSPs.

As for the $(\sigma_k^{(j)}, r_k^{(j)})$, we recall the principle
\eqref{eq.posterior.dec} that motivates the aforementioned simple strategy. As we directly ignore the curvature information, it only converges to a first-order stationary point when $f(x)$ is nonconvex. Therefore we propose the following adaptive strategy (\autoref{strategy.SOSP}) to allow convergence to second-order stationary points.

\begin{strategy}[The Strategy for Second-order Stationary Points]\label{strategy.SOSP}
    In the Line 5 of \autoref{alg.adaptivenewTR}, we apply the following strategy in \autoref{tab.adaptive.strategy}.
    \renewcommand{\arrayrulewidth}{0.2pt}
    \begin{table}[ht]
        \centering
        \caption{The Adaptive Strategy}\label{tab.adaptive.strategy}
        \footnotesize
        \begin{tabular}{|c|c|c|}
            \hline
            Gradient                                & Conditions                                                                     & Selection of $(\sigma_k, r_k)$                                       \\
            \hline
            \multirow{3}{*}{$\|\gk\| \ge \epsilon$} & $\lambda_{\min}(\nabla^2 f(x_k)) \leq -\rho_k^{(j)}\|\gk\|^{1/2}$                          & \multirow{2}{*}{$\sigma_k^{(j)} = 0, \ r_k^{(j)} = 1/2\rho_k^{(j)}$} \\
            \cline{2-2}
                                                    & $\lambda_{\min}(\nabla^2 f(x_k)) \ge \rho_k^{(j)}\|\gk\|^{1/2}$                            &                                                                      \\
            \cline{2-3}
                                                    & $-\rho_k^{(j)}\|\gk\|^{1/2} < \lambda_{\min}(\nabla^2 f(x_k)) < \rho_k^{(j)}\|\gk\|^{1/2}$ & $\sigma_k^{(j)} = \rho_k^{(j)}, \ r_k^{(j)} = 1/4\rho_k^{(j)}$       \\
            \hline
            \multirow{2}{*}{$\|\gk\| < \epsilon$}   & $\lambda_{\min}(\nabla^2 f(x_k)) > -\rho_k^{(j)} \epsilon^{1/2}$                           & \cmark                                                               \\
            \cline{2-3}
                                                    & $\lambda_{\min}(\nabla^2 f(x_k)) \leq -\rho_k^{(j)} \epsilon^{1/2}$                        & $\sigma_k^{(j)} = 0, \ r_k^{(j)} = \epsilon^{1/2}/\left(2\rho_k^{(j)}\gkc\right)$ \\
            \hline
        \end{tabular}
    \end{table}
    The symbol \cmark means $x_k$ is already an $\epsilon$-SOSP and we can terminate the \autoref{alg.adaptivenewTR}.
\end{strategy}

% \begin{algorithm}[!ht]
%     \caption{An Adaptive Rule for Second-order Stationary Points}
%     \label{alg.adaptiverule1}
%     \begin{algorithmic}[1]
%         % \IF[finding first-order stationary points]{$\|\gk\| \ge \epsilon$}\label{line.case1}
%         \IF{$\|\gk\| \ge \epsilon$}\label{line.case1}
%         \IF{$\lambda_{\min}(\nabla^2 f(x_k)) \leq -\rho_k^{(j)}\|\gk\|^{1/2}$ or $\lambda_{\min}(\nabla^2 f(x_k)) \geq \rho_k^{(j)}\|\gk\|^{1/2}$}\label{line.subcase1}
%         \STATE{Set $\left(\sigma_k^{(j)}, r_k^{(j)}\right) = \left(0, \frac{1}{2\rho_k^{(j)}}\right)$;}
%         \ELSE\label{line.subcase2}
%         \STATE{Set $\left(\sigma_k^{(j)}, r_k^{(j)}\right) = \left(\rho_k^{(j)}, \frac{1}{4\rho_k^{(j)}}\right)$;}
%         \ENDIF
%         \ELSE
%         \IF{$\lambda_{\min}(\nabla^2 f(x_k))\leq -\rho_k^{(j)} \epsilon^{1/2}$}\label{line.omit1}
%         \STATE{Set $\left(\sigma_k^{(j)}, r_k^{(j)}\right) = \left(0, \frac{\epsilon^{1/2}}{2\rho_k^{(j)}\gkc}\right)$;}
%         \ELSE
%         \RETURN{$\xk$ \eqref{eq.epsilon solution fo} and \eqref{eq.epsilon solution so};}\label{line.omit2}
%         \ENDIF
%         \ENDIF
%     \end{algorithmic}
% \end{algorithm}

We later justify that the direction $d_k^{(j)}$ will gradually be accepted at some $j$ (see \autoref{lem.upperbound sigma}). As shown in the next subsection, the adaptive method converges to $\epsilon$-SOSPs with the same complexity as the previous conceptual version. Furthermore, the adaptive version also allows us to adjust the regularization $\sigma_k$, which leads to a faster speed of local convergence. Certainly, such a strategy relies on additional information from the smallest eigenvalue.
As the trust-region method very often utilizes a Lanczo-type method to solve the subproblems, e.g., \citet{cartis2022evaluation,conn2000trust},  using the smallest eigenvalue of the Hessian incurs no significant cost, see \citet{gratton2023yet,hamad2022consistently}. If instead we use a factorization-based method, the Cholesky factorization can also serve the purpose of the eigenvalue test: we may increase the dual-variable $\lambda_k$ if the factorization fails, in which case, an estimate of the smallest eigenvalue can be built from $\lambda_k$ and $\sigma_k$.

\subsection{Global Convergence}
In this subsection, we begin with the complexity analysis in the nonconvex case. We demonstrate that \autoref{alg.adaptivenewTR} requires no more than $\Tilde{O}\left(\epsilon^{-3/2}\right)$ iterations to converge to an $\epsilon$-approximate second-order stationary point satisfying \eqref{eq.epsilon solution fo} and \eqref{eq.epsilon solution so}.
% Moreover, in the case of convex problems, the complexity reduces to $\Tilde{O}(\epsilon^{-\frac{1}{2}})$.
The following lemma shows that there exists an upper bound on the penalty parameter $\rho_k^{(j)}$, leading to the termination of the inner loop.

\begin{lemma}
    \label{lem.upperbound sigma}
    In \autoref{alg.adaptivenewTR} with \autoref{strategy.SOSP}, there exists a uniform upper bound for the parameter $\rho_k^{(j)}$, that is
    \begin{equation}
        \label{eq.upperbound sigma}
        \rho_k^{(j)} \leq \rho_{\max}:=\gamma_1  \cdot \max \left \{\sqrt{\frac{M}{12(1-32\eta)}},\sqrt{\frac{M}{6(1-8\eta)}},\sqrt{\frac{M}{32\xi-8}},\sqrt{\frac{M}{8\xi}},\sqrt{\frac{M\xi}{8(1-\xi)}}\right \}.
    \end{equation}
\end{lemma}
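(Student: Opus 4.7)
The plan is to show that, for each outer iteration $k$, the inner $j$-loop terminates as soon as $\rho_k^{(j)}$ crosses a threshold $\bar\rho$ determined by a case analysis on the row of \autoref{tab.adaptive.strategy} chosen at the inner iteration. Since each failed inner step multiplies $\rho_k^{(j)}$ by exactly $\gamma_1$, the accepted penalty then satisfies $\rho_k^{(j)} \le \gamma_1 \bar\rho$; taking $\bar\rho$ to be the largest threshold across the four branches will recover the stated $\rho_{\max}$.

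The universal tool is to combine \autoref{lem.lipschitz} with the first-order KKT \eqref{eq.optcond firstorder} to obtain
\[
    f(x_k + d_k^{(j)}) - f(x_k) \le -\tfrac{1}{2}(d_k^{(j)})^T H_k d_k^{(j)} - (\sigma_k^{(j)} \|g_k\|^{1/2} + \lambda_k)\,\|d_k^{(j)}\|^2 + \tfrac{M}{6}\|d_k^{(j)}\|^3,
\]
which I pair with the gradient bound from \autoref{lem.gradient norm conceptual} when $\lambda_k=0$. In the strong negative-curvature branch $\lambda_{\min}(H_k) \le -\rho_k^{(j)}\|g_k\|^{1/2}$ with $\sigma_k^{(j)} = 0$, the second-order KKT \eqref{eq.optcond secondorder} forces $\lambda_k \ge \rho_k^{(j)}\|g_k\|^{1/2}$, so the trust region is active with $\|d_k^{(j)}\| = \|g_k\|^{1/2}/(2\rho_k^{(j)})$; using $-d_k^T H_k d_k \le \lambda_k \|d_k\|^2$ (again from \eqref{eq.optcond secondorder}) and plugging in, \autoref{cond.modified function or gradient des} reduces to $(\rho_k^{(j)})^2 \ge M/(6(1-8\eta))$. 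The small-gradient row $\|g_k\| < \epsilon$ with sufficiently negative curvature is identical after replacing $\|g_k\|^{1/2}$ by $\epsilon^{1/2}$ and yields the same threshold.

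In the middle branch $|\lambda_{\min}(H_k)| < \rho_k^{(j)}\|g_k\|^{1/2}$, with $\sigma_k^{(j)} = \rho_k^{(j)}$ and $r_k^{(j)} = 1/(4\rho_k^{(j)})$, I split on whether the trust region is active. If active, the curvature bound $d_k^T H_k d_k \ge -\rho_k^{(j)}\|g_k\|^{1/2}\|d_k\|^2$ combined with the regularization term $\sigma_k^{(j)} \|g_k\|^{1/2}\|d_k\|^2 = \rho_k^{(j)}\|g_k\|^{1/2}\|d_k\|^2$ produces a net descent of order $\rho_k^{(j)}\|g_k\|^{1/2}\|d_k\|^2/2$, yielding the threshold $M/(12(1-32\eta))$. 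If inactive so that $\lambda_k=0$, \autoref{lem.gradient norm conceptual} gives the gradient-bound threshold $M/(32\xi - 8)$. In the positive-curvature branch $\lambda_{\min}(H_k) \ge \rho_k^{(j)}\|g_k\|^{1/2}$ with $\sigma_k^{(j)} = 0$ and $r_k^{(j)} = 1/(2\rho_k^{(j)})$, an inactive trust region gives $M/(8\xi)$ via \autoref{lem.gradient norm conceptual}; the active case is handled by the negative-curvature argument verbatim, with the Hessian itself now supplying $d_k^T H_k d_k \ge \rho_k^{(j)}\|g_k\|^{1/2}\|d_k\|^2$ in place of the $\lambda_k$ term.

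The delicate step is the active sub-case of the positive-curvature branch: because $\sigma_k^{(j)} = 0$ in that row of \autoref{tab.adaptive.strategy} the regularization contribution vanishes, and a direct appeal to \autoref{lem.gradient norm conceptual} would demand $\xi > 1/2$, which is not assumed. The fix is to abandon the gradient-bound route there and instead extract the required descent from the positive-definiteness of $H_k$ through the Lipschitz bound above, so this sub-case reuses the threshold $M/(6(1-8\eta))$ already present in $\rho_{\max}$ rather than contributing a new term. Taking the maximum of the four thresholds and multiplying by $\gamma_1$ then concludes the proof.
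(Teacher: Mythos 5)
Your proposal is correct and follows the same skeleton as the paper's proof: a case analysis over the rows of \autoref{tab.adaptive.strategy}, showing that once $\rho_k^{(j)}$ passes a branch-dependent threshold the step satisfies the acceptance test, then absorbing the at-most-$\gamma_1$ overshoot; your four thresholds coincide with those in \eqref{eq.upperbound sigma}. The genuine difference is the strong negative-curvature branch (and its small-gradient analogue): the paper compares the model value of $d_k^{(j)}$ with that of a feasible \emph{eigenpoint} $d_k^E=\frac{\epsilon^{1/2}}{2\rho_k^{(j)}}v_k$, whereas you read $\lambda_k\ge\rho_k^{(j)}\|\gk\|^{1/2}$ off \eqref{eq.optcond secondorder} (with $\sigma_k^{(j)}=0$), get activeness from \eqref{eq.optcond coml slack}, and insert $-\tfrac12 d^T H_k d\le\tfrac{\lambda_k}{2}\|d\|^2$ into the descent identity. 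Both yield the same constant $\sqrt{M/(6(1-8\eta))}$; your route is more uniform across branches and avoids an auxiliary trial point, while the eigenpoint device is the one that would still work if the subproblem were solved only approximately. Two small items to tidy: the acceptance test also requires \autoref{cond.monotone}, so in the two inactive ($\lambda_k=0$) subcases where you only prove the $\xi$-contraction you should add the one-line check (as the paper does) that the descent identity gives $f(\xk+\dk^{(j)})\le f(\xk)$; and your stated obstruction for the active positive-curvature subcase ("$\xi>1/2$") is not quite the right reason---\autoref{lem.gradient norm conceptual} simply does not apply when $\lambda_k>0$ and the extra term $\lambda_k\|d_k\|$ need not be small---though your resolution via the function-decrease route is exactly the paper's.
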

Since this lemma is quite technical, we delay the analysis in \autoref{sec.technical proofs}.
As a direct consequence of \autoref{lem.upperbound sigma}, the iteration number of the inner loop in \autoref{alg.adaptivenewTR} could be upper bounded.
\begin{corollary}
    \label{coro.length j loop}
    The number of oracle calls in inner $j$-loop of \autoref{alg.adaptivenewTR} is bounded by $\log_{\gamma_1} \frac{\rho_{\max}}{\rho_{\min}}$.
\end{corollary}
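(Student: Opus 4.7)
The plan is to combine the geometric growth of $\rho_k^{(j)}$ within the inner loop with the uniform upper bound established in \autoref{lem.upperbound sigma}. Since the update rule in Line 10 sets $\rho_k^{(j+1)} = \gamma_1 \rho_k^{(j)}$ whenever \autoref{cond.monotone} or \autoref{cond.modified function or gradient des} fails, an easy induction gives
\begin{equation*}
    \rho_k^{(j)} = \gamma_1^j \rho_k^{(0)} = \gamma_1^j \rho_k.
\end{equation*}

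Next, I would observe that the outer update $\rho_{k+1} = \max\{\rho_{\min}, \rho_k^{(j)}/\gamma_2\}$ guarantees $\rho_k \geq \rho_{\min}$ for every outer index $k$, so that $\rho_k^{(j)} \geq \gamma_1^j \rho_{\min}$. Combining this lower bound with the uniform upper bound $\rho_k^{(j)} \leq \rho_{\max}$ from \autoref{lem.upperbound sigma} yields $\gamma_1^j \rho_{\min} \leq \rho_{\max}$, i.e., $j \leq \log_{\gamma_1}(\rho_{\max}/\rho_{\min})$. Hence the inner loop must exit within at most $\lceil \log_{\gamma_1}(\rho_{\max}/\rho_{\min}) \rceil$ iterations, which is the claimed bound on oracle calls.

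There is essentially no obstacle here: the argument is a direct corollary of \autoref{lem.upperbound sigma}, and the only thing to be careful about is verifying that $\rho_k \geq \rho_{\min}$ is preserved across outer iterations (which follows immediately from the $\max$ in the update rule), so that the geometric progression starts no lower than $\rho_{\min}$. All the technical work is hidden inside \autoref{lem.upperbound sigma}.
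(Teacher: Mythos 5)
Your argument is correct and is exactly the reasoning the paper intends: the inner loop multiplies $\rho_k^{(j)}$ by $\gamma_1$ starting from $\rho_k^{(0)}=\rho_k\ge\rho_{\min}$, and \autoref{lem.upperbound sigma} caps $\rho_k^{(j)}$ at $\rho_{\max}$ (the extra factor $\gamma_1$ in $\rho_{\max}$ already accounting for the last increase), which gives the stated $\log_{\gamma_1}(\rho_{\max}/\rho_{\min})$ bound. The paper presents the corollary as a direct consequence of \autoref{lem.upperbound sigma} without further proof, so your write-up simply makes that implicit argument explicit.
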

\paragraph{Nonconvex Functions}
Now we are ready to give a formal iteration complexity analysis of \autoref{alg.adaptivenewTR}. We show that for the nonconvex objective function with Lipschitz continuous Hessian, \autoref{alg.adaptivenewTR} takes $\tilde{O}\left(\epsilon^{-3/2}\right)$ to find an $\epsilon$-approximate second-order stationary point $x$ satisfying \eqref{eq.epsilon solution fo} and \eqref{eq.epsilon solution so}.
% We also remark that for convex case, \autoref{alg.adaptivenewTR} has a complexity of $\tilde{O}(\epsilon^{-\frac{1}{2}})$ to find an approximate solution satisfies \eqref{eq.conceptual convex approximate solution}, the proof is quite similar, so we give the main result but omit the proof here.

Similar to the previous section, the following analysis is standard. As shown in \autoref{lem.upperbound sigma}, the iterates produced by \autoref{alg.adaptivenewTR} will end up satisfying \autoref{cond.modified function or gradient des}, therefore we can classify them into the following two categories
\begin{equation}\label{eq.index set adaptive}
    \begin{aligned}
        \mathcal{F}_{p_s} & = \left \{k\leq p_s: f(\xk)-f(\xkn)\geq \frac{\eta}{\rho_{\max}}\max \left\{\|\gk\|^{3/2},\epsilon^{3/2}\right\} \right\}, \\
        \mathcal{G}_{p_s} & = \left \{k\leq p_s: \|\gkn\|\leq \xi\|\gk\| \right \},
    \end{aligned}
\end{equation}
and denote $x_{p_s}$ as the first iteration satisfying \eqref{eq.epsilon solution fo} and \eqref{eq.epsilon solution so}. Then by the mechanism of the \autoref{alg.adaptivenewTR}, all indices belong to one of the sets defined in \eqref{eq.index set adaptive}, and thus we only need to provide an upper bound for the summation
\begin{equation}\label{eq.summation form}
    T_{p_s}: = \vert \mathcal{F}_{p_s}\vert + \vert \mathcal{G}_{p_s} \vert .
\end{equation}
Similar to the previous section, we first show that the norm of the gradient of the iterates have an uniform upper bound, with the proof deferred to \autoref{sec.technical proofs}. 
\begin{lemma}
    \label{lem.bounded g 2}
    Suppose \autoref{assm.lipschitz} holds, denote the sequence generated by \autoref{alg.adaptivenewTR} with \autoref{strategy.SOSP} as $\{x_k\}$,  then for all $k\leq p_s$:
    \begin{equation*}
        \|\nabla f(x_k) \| \leq G_a, \ \text{where}\\
    \end{equation*}
    \begin{equation}
    \label{eq.uniform upperbound g 2}
        G_a:=\max\left \{\left(\xi+\frac{M}{6\rho_{\min}^2}+16\eta\right)\left (\frac{\rho_{\max}\left (f(x_0)-f^*\right )}{\eta}\right)^{2/3} , \|\nabla f(x_0)\|,\;\frac{32\rho_{\max}\left (f(x_0)-f^*\right )}{\sqrt{\epsilon}}\right\}.
    \end{equation}
\end{lemma}
For the index set $\mathcal{F}_{p_s}$ and $\mathcal{G}_{p_s}$, we conclude the following results.
\begin{lemma}
    \label{lem.adaptive convex bound n}
    Suppose that \autoref{assm.lipschitz} holds, the cardinality of the index sets $\mathcal{F}_{p_s}$ and $\mathcal{G}_{p_s}$ satisfies
    \begin{equation}\label{eq.adaptive convex bound n}
        \vert \mathcal{F}_{p_s}\vert \leq  \frac{\rho_{\max}}{\eta}\left (f(x_0)-f^*\right)\epsilon^{-3/2},
    \end{equation}
    and
    \begin{equation}
        \label{eq.adaptive bound g by f}
        \vert \mathcal{G}_{p_s} \vert \leq \log (1/\xi)\log (G_a/\epsilon) \vert \mathcal{F}_{p_s} \vert.
    \end{equation}
\end{lemma}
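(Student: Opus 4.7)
The plan is to mimic the two-step approach used in \autoref{coro.conceptual upperbound set n} and \autoref{lem.conceptual bound z by n}, adapting only the constants to the adaptive setting where $\rho_{\max}$ (from \autoref{lem.upperbound sigma}) plays the role of $\sqrt{M}/\kappa$, and $G_a$ (from \autoref{assm.bounded g 2}) plays the role of $G$.

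For the bound on $|\mathcal{F}_{p_s}|$, the key observation is that the definition of $\mathcal{F}_p$ in \eqref{eq.index set adaptive} contains the $\max$ with $\epsilon^{3/2}$, so every $k \in \mathcal{F}_{p_s}$ yields a function decrease of at least $\frac{\eta}{\rho_{\max}}\epsilon^{3/2}$ regardless of whether $\|\gk\|$ is larger or smaller than $\epsilon$. Combined with \autoref{cond.monotone} (ensuring the iterates outside $\mathcal{F}_{p_s}$ do not increase $f$), I would telescope
$$
f(x_0) - f^* \;\ge\; \sum_{k \in \mathcal{F}_{p_s}} \bigl(f(\xk) - f(\xkn)\bigr) \;\ge\; |\mathcal{F}_{p_s}| \cdot \frac{\eta}{\rho_{\max}}\epsilon^{3/2},
$$
and rearrange to obtain \eqref{eq.adaptive convex bound n}. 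This part is essentially routine, once \autoref{lem.upperbound sigma} is invoked to justify that $\rho_k^{(j)} \le \rho_{\max}$ uniformly.

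For the bound on $|\mathcal{G}_{p_s}|$, I would follow the argument of \autoref{lem.conceptual bound z by n} verbatim. Let $n_p$ denote the maximum number of consecutive indices that can lie in $\mathcal{G}_{p_s}$. By \autoref{assm.bounded g 2}, the gradient starts at some value bounded by $G_a$, and by definition of $\mathcal{G}_{p_s}$ it contracts by a factor of $\xi$ per iteration. The algorithm runs until $\|g_k\| \le \epsilon$ (for SOSP termination we need both \eqref{eq.epsilon solution fo} and \eqref{eq.epsilon solution so}, so gradient norm must drop below $\epsilon$ at least), so a stretch of consecutive $\mathcal{G}$-iterates must satisfy $\xi^{n_p} G_a > \epsilon$, giving $n_p < \log(1/\xi)\log(G_a/\epsilon)$. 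Between any two such maximal stretches, an index from $\mathcal{F}_{p_s}$ must appear, so
$$
|\mathcal{G}_{p_s}| \;\le\; n_p \cdot |\mathcal{F}_{p_s}| \;\le\; \log(1/\xi)\log(G_a/\epsilon)\,|\mathcal{F}_{p_s}|,
$$
which is exactly \eqref{eq.adaptive bound g by f}.

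The main conceptual obstacle, already resolved by the preceding lemmas, is to ensure that every iterate is assigned to at least one of $\mathcal{F}_{p_s}$ or $\mathcal{G}_{p_s}$. This follows from the acceptance rule (Line 7) of \autoref{alg.adaptivenewTR}, which only exits the inner $j$-loop once \autoref{cond.modified function or gradient des} is fulfilled; by \autoref{lem.upperbound sigma} such a $j$ exists with $\rho_k^{(j)} \le \rho_{\max}$. The only minor subtlety worth spelling out is in the case $\|g_k\| < \epsilon$ in \autoref{cond.modified function or gradient des}, where the second alternative (sufficient gradient decrease) is unavailable; but then by construction the iterate falls into $\mathcal{F}_{p_s}$ because the guaranteed decrease is exactly $\frac{\eta}{\rho_k^{(j)}}\epsilon^{3/2} \ge \frac{\eta}{\rho_{\max}}\epsilon^{3/2}$, matching the definition of $\mathcal{F}_p$. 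With this partition verified, both inequalities follow immediately.
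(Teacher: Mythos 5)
Your proposal is correct and follows exactly the route the paper intends: the paper omits the proof, stating it is "almost the same as" \autoref{coro.conceptual upperbound set n} and \autoref{lem.conceptual bound z by n}, and your telescoping argument for $\vert\mathcal{F}_{p_s}\vert$ (using the $\max$ with $\epsilon^{3/2}$ and $\rho_k^{(j)}\le\rho_{\max}$ from \autoref{lem.upperbound sigma}) together with the geometric-contraction count for $\vert\mathcal{G}_{p_s}\vert$ (using $G_a$ from \autoref{assm.bounded g 2}) is precisely that adaptation. No gaps; your remark on the $\|\gk\|<\epsilon$ case correctly explains why every accepted iterate lands in one of the two index sets.
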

% \begin{proof}
%     The proof is almost the same as \autoref{lem.conceptual bound z by n}. Just note that when the gradient norm satisfies $\|\gk\| \leq \epsilon$, then \autoref{alg.adaptivenewTR} will terminate or the function value will decrease by $\frac{\eta}{\rho_{\max}}\epsilon^{\frac{3}{2}}$.
% \Halmos \end{proof}
We omit the proofs as they are almost the same as \autoref{coro.conceptual upperbound set n} and \autoref{lem.conceptual bound z by n}. Therefore, we are ready to present the formal complexity result of \autoref{alg.adaptivenewTR}.

\begin{theorem}
    \label{thm.adaptive nonconvex}
    Suppose that \autoref{assm.lipschitz} holds, \autoref{alg.adaptivenewTR} takes
    \begin{equation}
        \label{eq.adaptive nonconvex}
        O\left(\rho_{\max}(f(x_0) - f^*)\epsilon^{-3/2}\log\left(G_a/\epsilon\right) \log_{\gamma_1} \left(\rho_{\max}/\rho_{\min}\right )\right)
    \end{equation}
    iterations to find an $\epsilon$-approximate second-order solution satisfying \eqref{eq.epsilon solution fo} and \eqref{eq.epsilon solution so}.
\end{theorem}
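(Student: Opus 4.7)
The plan is to combine the outer-iteration index count (Lemma 4.2) with the inner-loop count (Corollary 4.1) and then verify that the termination condition in Strategy 4.1 does produce an $\epsilon$-approximate SOSP in the sense of Definition 2.1. The upshot is that Theorem 4.2 is essentially a bookkeeping statement once Lemma 4.1 has been established.

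First, I would fix $p_s$ to be the first index satisfying both \eqref{eq.epsilon solution fo} and \eqref{eq.epsilon solution so}, and observe that by the mechanism of \autoref{alg.adaptivenewTR} together with \autoref{cond.modified function or gradient des}, each accepted step at iteration $k < p_s$ falls into either $\mathcal{F}_{p_s}$ or $\mathcal{G}_{p_s}$, so
\[
p_s \;\leq\; |\mathcal{F}_{p_s}| + |\mathcal{G}_{p_s}|.
\]
Then I would plug in the two estimates from \autoref{lem.adaptive convex bound n}: the function-decrease bound \eqref{eq.adaptive convex bound n} gives
$|\mathcal{F}_{p_s}| = O\!\left(\rho_{\max}(f(x_0)-f^*)\epsilon^{-3/2}\right)$, and the gradient-contraction bound \eqref{eq.adaptive bound g by f} gives $|\mathcal{G}_{p_s}| \leq \log(1/\xi)\log(G_a/\epsilon)\,|\mathcal{F}_{p_s}|$. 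Summing these yields
\[
p_s \;\leq\; \Bigl(1+\log(1/\xi)\log(G_a/\epsilon)\Bigr)\,|\mathcal{F}_{p_s}| \;=\; O\!\left(\rho_{\max}(f(x_0)-f^*)\epsilon^{-3/2}\log(G_a/\epsilon)\right).
\]

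Next, I would convert outer iterations to total oracle calls by multiplying by the inner-loop length. \autoref{coro.length j loop} asserts that the $j$-loop starting at $\rho_k^{(0)} = \rho_k \geq \rho_{\min}$ and growing by the factor $\gamma_1$ until $\rho_k^{(j)} \leq \rho_{\max}$ (guaranteed by \autoref{lem.upperbound sigma}) terminates after at most $\log_{\gamma_1}(\rho_{\max}/\rho_{\min})$ steps. Multiplying this factor into the outer bound above produces exactly the quantity \eqref{eq.adaptive nonconvex}.

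Finally, I would check that stopping with the \cmark case of \autoref{strategy.SOSP} really certifies an $\epsilon$-SOSP: termination there requires $\|g_k\| < \epsilon$ (so \eqref{eq.epsilon solution fo} holds) and $\lambda_{\min}(H_k) > -\rho_k^{(j)}\epsilon^{1/2}$; since $\rho_k^{(j)} \le \rho_{\max}$ is a universal constant depending only on $M,\eta,\xi,\gamma_1$, this gives $\lambda_{\min}(H_k) \ge -\Omega(\epsilon^{1/2})$ and hence \eqref{eq.epsilon solution so}. I would also briefly remark that if $\|g_k\| < \epsilon$ but the Hessian test fails, Strategy 4.1 selects $\sigma_k^{(j)} = 0$, $r_k^{(j)} = \epsilon^{1/2}/(2\rho_k^{(j)}\|g_k\|^{1/2})$, which by \autoref{cond.modified function or gradient des} yields a guaranteed $\epsilon^{3/2}$-order function decrease per step; this is precisely what the $\max\{\|g_k\|^{3/2},\epsilon^{3/2}\}$ in \eqref{eq.index set adaptive} is designed to accommodate, so the $|\mathcal{F}_{p_s}|$ bound still applies uniformly over the two regimes.

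The only genuinely delicate step is \autoref{lem.upperbound sigma}, whose proof is deferred to the appendix; every other component is a direct assembly of previously proven pieces, and the overall calculation is a straightforward multiplication of the outer index count with the inner loop length.
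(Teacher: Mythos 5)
Your proposal is correct and follows essentially the same route as the paper: the paper's own proof is a one-line assembly of \autoref{lem.adaptive convex bound n} (bounding $|\mathcal{F}_{p_s}|$ and $|\mathcal{G}_{p_s}|$) with the inner-loop length from \autoref{coro.length j loop}, which is exactly your bookkeeping $p_s \leq |\mathcal{F}_{p_s}| + |\mathcal{G}_{p_s}|$ multiplied by $\log_{\gamma_1}(\rho_{\max}/\rho_{\min})$. Your additional check that the \cmark{} termination in \autoref{strategy.SOSP} certifies an $\epsilon$-SOSP (using $\rho_k^{(j)} \le \rho_{\max}$ as a problem-dependent constant) is a sensible explicit verification that the paper leaves implicit.
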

\begin{proof}
    The result is directly implied by \autoref{lem.adaptive convex bound n} and \autoref{coro.length j loop}.
\end{proof}
% \begin{theorem}
%     \label{thm.adaptive convex}
%     Under \autoref{assm.lipschitz}, \autoref{lem.bounded g} and \autoref{assm.conceptual bounded set}, \autoref{alg.adaptivenewTR} makes at most $\Tilde{O}(\epsilon^{-\frac{1}{2}})$ oracle calls to find an $\epsilon$-approximate solution that satisfies \eqref{eq.conceptual convex approximate solution}. $T_{p_s}$ is defined in \eqref{eq.summation form}.
% \end{theorem}
% So far we can see, compared to other trust-region variants \cite{curtis2017trust,gratton2023yet}, our analysis is very clean and the assumptions needed in the complexity analysis are less restrictive.
\paragraph{Convex Functions}
For the case where the objective function is convex, we are satisfied with points such that the norm of the gradient less than $\epsilon$, then the two sets becomes 
\begin{equation*}
    \begin{aligned}
        \mathcal{F}_{p_s} & = \left \{k\leq p_s: f(\xk)-f(\xkn)\geq \frac{\eta}{\rho_{\max}}\|\gk\|^{3/2}\right\}, \\
        \mathcal{G}_{p_s} & = \left \{k\leq p_s: \|\gkn\|\leq \xi\|\gk\| \right \}.
    \end{aligned}
\end{equation*}
The following analysis is the same as \autoref{alg.conceptual utr} with \autoref{strategy.simple strategy}. We directly provide the convergence result here.
% \begin{property}
%     \label{cond.modified convex}
%     Suppose $f(x)$ is convex, for the same $\xi$ in \autoref{cond.modified function or gradient des}, the step $d_k^{(j)}$ satisfies

% \end{property}

\begin{theorem}
    \label{remark.adaptive convex}
    Suppose that $f(x)$ is convex, \autoref{assm.lipschitz} and \autoref{assm.conceptual bounded set} hold, then \autoref{alg.adaptivenewTR} takes
    \begin{equation}
        \label{eq.adaptive convex}
        O\left(\rho_{\max}(f(x_0) - f^*)\log_{\gamma_1} \left(\rho_{\max}/\rho_{\min}\right )\epsilon^{-1/2} \right)
    \end{equation}
    iterations to find an $\epsilon$-approximate solution satisfying \eqref{eq.conceptual convex approximate solution}.
\end{theorem}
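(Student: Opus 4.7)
The plan is to adapt the proof of \autoref{thm.complexity convex} to the adaptive setting, substituting the uniform bound $\rho_{\max}$ from \autoref{lem.upperbound sigma} in place of the Lipschitz-dependent constant $\sqrt{M}$, and then multiplying the outer iteration count by the per-step inner-loop oracle cost delivered by \autoref{coro.length j loop}. The structural logic of partitioning iterations into a function-decrease set and a gradient-contraction set is preserved verbatim.

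Concretely, I would first partition the outer iterations up to $p_s$ into $\mathcal{F}_{p_s}$ and $\mathcal{G}_{p_s}$ as defined in \eqref{eq.index set adaptive}, breaking ties in favor of $\mathcal{F}_{p_s}$; by \autoref{cond.modified function or gradient des} every index lands in at least one of them. For $k\in\mathcal{F}_{p_s}$, the adaptive sufficient-decrease gives $f(\xkn)-f(\xk)\le -(\eta/\rho_{\max})\|\gk\|^{3/2}$, which together with convexity and \autoref{assm.conceptual bounded set} yields $\|\gk\|\ge (f(\xk)-f^*)/D$. Enumerating $\mathcal{F}_{p_s}$ as $\{j(1),j(2),\ldots\}$ and telescoping $1/\sqrt{\delta_i}$ with $\delta_i=f(x_{j(i)})-f^*$ in the fashion of \autoref{lem.conceptual cvx function descend} delivers $|\mathcal{F}_{p_s}| = O(\rho_{\max}D^{3/2}/(\eta\sqrt{\epsilon}))$. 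Setting $T_\epsilon = 2\sqrt{4D^3\rho_{\max}^2/(\eta^2\epsilon)} + \log(1/\xi)\log(\|g_0\|/\epsilon)$, the same case split used in the proof of \autoref{thm.complexity convex} shows $p_s\le T_\epsilon$: either $|\mathcal{F}_{T_\epsilon}|$ is already large enough for the telescoping to deliver $f(x_{T_\epsilon})-f^*\le\epsilon$, or the remaining iterations lie in $\mathcal{G}_{T_\epsilon}$, where the $\xi$-contraction on $\|\gk\|$ compounded with the $(1/\xi)$-safeguard from \autoref{cond.modified convex} forces $\|g_{T_\epsilon}\|\le\epsilon$, whence convexity plus \autoref{assm.conceptual bounded set} yield $f(x_{T_\epsilon})-f^* \le D\|g_{T_\epsilon}\| = O(\epsilon)$. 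Multiplying $T_\epsilon$ by the inner-loop bound $\log_{\gamma_1}(\rho_{\max}/\rho_{\min})$ from \autoref{coro.length j loop} recovers the stated complexity.

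The main obstacle, which the theorem tacitly relies upon, is certifying that \autoref{cond.modified convex} is actually enforceable by \autoref{strategy.SOSP} within the same uniform bound $\rho_{\max}$. I would upgrade \autoref{lem.upperbound sigma} by an argument mirroring \autoref{cor.gradient no blow cvx}: for $\rho_k^{(j)}$ above some $M$-proportional threshold, the parameters $(\sigma_k^{(j)},r_k^{(j)})$ prescribed in \autoref{tab.adaptive.strategy} push $(M/2)(r_k^{(j)})^2 + \sigma_k^{(j)} r_k^{(j)}$ below $1/\xi - 1$, while the primal-dual bound $\|\dk\|\le \|\gk\|/(\lambda_k+\sigma_k^{(j)}\gkc)$ from the proof of \autoref{cor.gradient no blow cvx} controls the residual term in \eqref{eq.cond3.gen}. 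This merely enlarges the max in \eqref{eq.upperbound sigma} by another entry of the same order, leaving $\rho_{\max}$ and therefore the final rate intact.
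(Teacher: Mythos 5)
The paper does not actually supply a proof of this theorem: it only remarks that \autoref{cond.modified convex} holds once $\rho_k^{(j)}$ exceeds a constant proportional to $\sqrt{M}$, and implicitly appeals to the convex analysis of \autoref{thm.complexity convex} together with \autoref{lem.upperbound sigma} and \autoref{coro.length j loop}. Your proposal reconstructs exactly this intended argument — the $\mathcal{F}/\mathcal{G}$ partition, the telescoping of $1/\sqrt{\delta_i}$ with $\tau=\eta/\rho_{\max}$, the balancing choice of $T_\epsilon$, and the multiplication by the inner-loop length — and in addition you correctly identify and patch the one point the paper glosses over, namely certifying \autoref{cond.modified convex} under \autoref{strategy.SOSP}; your threshold computation (forcing $\tfrac{M}{2}(r_k^{(j)})^2\le 1/\xi-1$ and using $(\lambda_k+\sigma_k^{(j)}\|\gk\|^{1/2})\|\dk\|\le\|\gk\|$ in the convex case) is sound and indeed only adds another $O(\sqrt{M})$ entry to $\rho_{\max}$. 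Two small points deserve mention. First, your bound $|\mathcal{F}_{p_s}|=O(\rho_{\max}D^{3/2}\eta^{-1}\epsilon^{-1/2})$ necessarily uses \autoref{assm.conceptual bounded set}, which the theorem statement omits but which is clearly intended (as in \autoref{thm.complexity convex}); the natural constant is $D^{3/2}$ rather than the $f(x_0)-f^*$ appearing in \eqref{eq.adaptive convex}, an inconsistency in the paper's statement rather than in your argument. Second, the acceptance test in Line 7 of \autoref{alg.adaptivenewTR} checks only \autoref{cond.monotone} and \autoref{cond.modified function or gradient des}, so a step may be accepted at a value of $\rho_k^{(j)}$ below your certification threshold; to close this you should either add \autoref{cond.modified convex} to the acceptance test, or observe that for convex $f$ every accepted step satisfies $\|\nabla f(\xk+\dk^{(j)})\|\le\left(1+\tfrac{M}{8\rho_{\min}^2}\right)\|\gk\|$ unconditionally, which replaces $1/\xi$ by a problem-dependent constant and leaves the stated complexity order unchanged.
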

\subsection{Local Convergence}
In this subsection, we give the local performance of \autoref{alg.adaptivenewTR} with \autoref{strategy.SOSP} under \autoref{assm.localassm}, and show that the method has a local quadratic rate of convergence when $(\sigma_k, r_k)$ is updated as in \autoref{strategy.SOSP}.

Since in the local scenario, we can regard $\epsilon\to 0$, and $\rho_k^{(j)}$ has a uniform upper bound, then \autoref{strategy.SOSP} will persist in the case when $k$ is sufficiently large:
$$\lambda_{\min}(\Hk)\geq \rho_k^{(j)}\|\gk\|^{1/2},$$ in which we always set $ (\sigma_k^{(j)},r_k^{(j)}) = ( 0, 1/2\rho_k^{(j)})$. The rest of the cases are irrelevant to our discussion. Similar to the previous discussion, we show that when $k$ is sufficiently large, the trust-region constraint \eqref{eq.optcond primal} will be inactive.
\begin{lemma}
    \label{lem.inactive tr}
    Suppose \autoref{assm.lipschitz} and \autoref{assm.localassm} hold, then the trust-region constraint \eqref{eq.optcond primal} will be inactive and $\lambda_k = 0$ when $k\to +\infty$.
\end{lemma}
\begin{proof}
    Note that by \eqref{eq.optcond firstorder} and \eqref{eq.local optima}, we have
    \begin{equation}
        \label{eq.local step norm}
        \|\dk\| =\left\|\left(\Hk+\lambda_k I\right )^{-1}\gk\right\|\leq \frac{\|\gk\|}{\|\Hk+\lambda_k I\|}\leq \frac{2\|\gk\|}{\mu}
    \end{equation}
    when $k$ is sufficiently large.
    Also by \eqref{eq.local optima}, we know that there exist a constant $k_l > 0$ bounded from above, 
    such that for all $k\geq k_l$. Thus, we have
    $$\|\gk\| <  \frac{\mu^2}{16\rho_{\max}^2},$$
    and then
    \begin{equation*}
        \| \dk\| \leq \frac{2\|\gk\|}{\mu} < \frac{\gkc}{2\rho_{\max}}\leq r_k\gkc.
    \end{equation*}
    This means $\dk$ is in the trust region, by \eqref{eq.optcond coml slack} we have $\lambda_k = 0$, and this completes the proof.
\end{proof}
As we set $\sigma_k = 0$ when $k$ is sufficiently large, the step that solves \eqref{eq.newTR} is equivalent to a Newton step $\dk = -\Hk^{-1}\gk$ rather than a regularized Newton step, indicating the local quadratic convergence of our algorithm.
\begin{theorem}
    \label{thm.local}
    Suppose \autoref{assm.lipschitz} and \autoref{assm.localassm} hold, \autoref{alg.adaptivenewTR} with \autoref{strategy.SOSP} has quadratic local convergence.
\end{theorem}
\begin{proof}
    Note that as in the previous section, \autoref{alg.adaptivenewTR} will recover the Newton method in the local phase, then it converges quadratically, see Theorem 3.5, \citet{nocedal1999numerical}.
\end{proof}

\section{Numerical Experiments}\label{sec.num}
In this section, we present numerical experiments. We implement the adaptive UTR (\autoref{alg.adaptivenewTR}) in Julia programming language. All experiments are performed on a MacBook Pro with Apple M2 Chip and 24GB LPDDR5 Memory.
% \endnote{Our implementation is public at: \texttt{https://github.com/bzhangcw/DRSOM.jl}.}
To enable efficient routines for trust-region subproblems, we implement two options. The first option utilizes the standard Cholesky factorization \citep[Algorithm 4.3]{nocedal1999numerical} and uses a hybrid bisection and Newton method to find the dual variable \citep{yeNewComplexityResult1991,yeCombiningBinarySearch1994}. When using this option, we name the method after \texttt{UTR}. The second option is an indirect method (so it is referred to as \texttt{iUTR}) by Krylov subspace iterations, which is consistent with the open-source implementation \citep{dussault_scalable_2023} of classical trust-region method and adaptive cubic regularized Newton method. Motivated from \citet{curtisWorstCaseComplexityTRACE2023} and \citet[Chapter 10]{cartis2022evaluation}, we use the Lanczos method with inexactness of subproblem solutions.

\paragraph{CUTEst benchmarks}
We conduct experiments on unconstrained problems with dimension $n \le 5000$ in the CUTEst benchmarks \citep{gould_cutest_2015}. Since many of these problems are nonconvex, we focus on comparisons with the classical trust-region method \citep{conn2000trust} and adaptive cubic regularized Newton method \citep{cartis2011adaptive1}. All methods use Krylov approaches to solve subproblems. Specifically, the classical trust-region method uses the Steihaug-Toint conjugate gradient method.  Since both the classical trust-region method (\newtontrst) and adaptive cubic regularized method (\arc) are well studied, we directly use the implementation in \citet{dussaultUnifiedEfficientImplementation2020}.

We present our results in \autoref{tab.perf.geocutest}.
We report $\overline t_{G}, \overline k_{G}$ as scaled geometric means of running time in seconds and iterations (scaled by 1 second and 50 iterations, respectively).
We regard a successful instance if it is solved within 200 seconds with an iterate $\xk$ such that $\|\nabla f(\xk)\|\le 10^{-5}$.
If an instance fails, its iteration number and running time are set to $20,000$. We set the total number of successful instances as $\mathcal K$. Then we present the number of function evaluations and gradient evaluations by $\overline k_G^f$ and $\overline k_G^g$, respectively, where $\overline k_G^g$ also includes the Hessian-vector evaluations.
\begin{table}[h!]
    \centering
    \caption{Performance of different algorithms on the CUTEst dataset. $\overline t_G,\overline k_G,\overline k_G^f,\overline k_G^g$ are computed as geometric means.
    }
    \label{tab.perf.geocutest}
    \begin{tabular}{lrrrrrr}
        \toprule
        method      & $\mathcal K$    & $\overline t_G$ & $\overline k_G$ & $\overline k_G^f$ & $\overline k_G^g$ \\
        \midrule
        \arc        & 167.00          & 5.32            & 185.03          & 185.03            & 888.35            \\
        \newtontrst & 165.00          & 6.14            & 170.44          & 170.44            & 639.64            \\
        \iutrhvp    & \textbf{181.00} & \textbf{4.23}   & \textbf{90.00}  & {107.19}          & 1195.47           \\
        \bottomrule
    \end{tabular}
\end{table}
In \autoref{tab.perf.geocutest}, \iutrhvp{} has the most successful numbers, best running time as well as iteration performance. These results match the complexity analysis that unveils the benefits of gradient norm in both trust-region radii and regularization terms.

\paragraph{Logistic regression}
For convex optimization, we test on logistic regression with $\ell_2$ penalty,
\begin{equation}\label{eq.logistic regression}
    f(x) = \frac{1}{N} \sum_{i=1}^N \log \left(1 + e^{-b_i \cdot a_i^T x}\right) + \frac{\gamma}{2}\|x\|^2.
\end{equation}
where $a_i \in \mathbb R^n,~b_i \in \{-1, 1\}$, $i = 1, 2, \cdots, N.$ We set $\gamma = 10^{-8}$ so that the Newton steps may fail at degenerate Hessians.
Since the problem is convex, we focus on comparisons with the adaptive Newton method with cubics ($\texttt{ArC}$, \citet{cartis2011adaptive1}) and variants of the regularized Newton method \citep{mishchenko_regularized_2023}. We implement the regularized Newton method (\texttt{RegNewton}) with fixed regularization $\sigma_k \in \{10^{-3}, 5\times10^{-4}\}$ and an adaptive version following \citet[Algorithm 2.3]{mishchenko_regularized_2023} that is named after \texttt{RegNewton-AdaN+}.
\begin{figure}[ht]
    \centering
    \footnotesize
    \begin{subfigure}{.48\textwidth}
        \centering
        \includegraphics[width=0.99\linewidth]{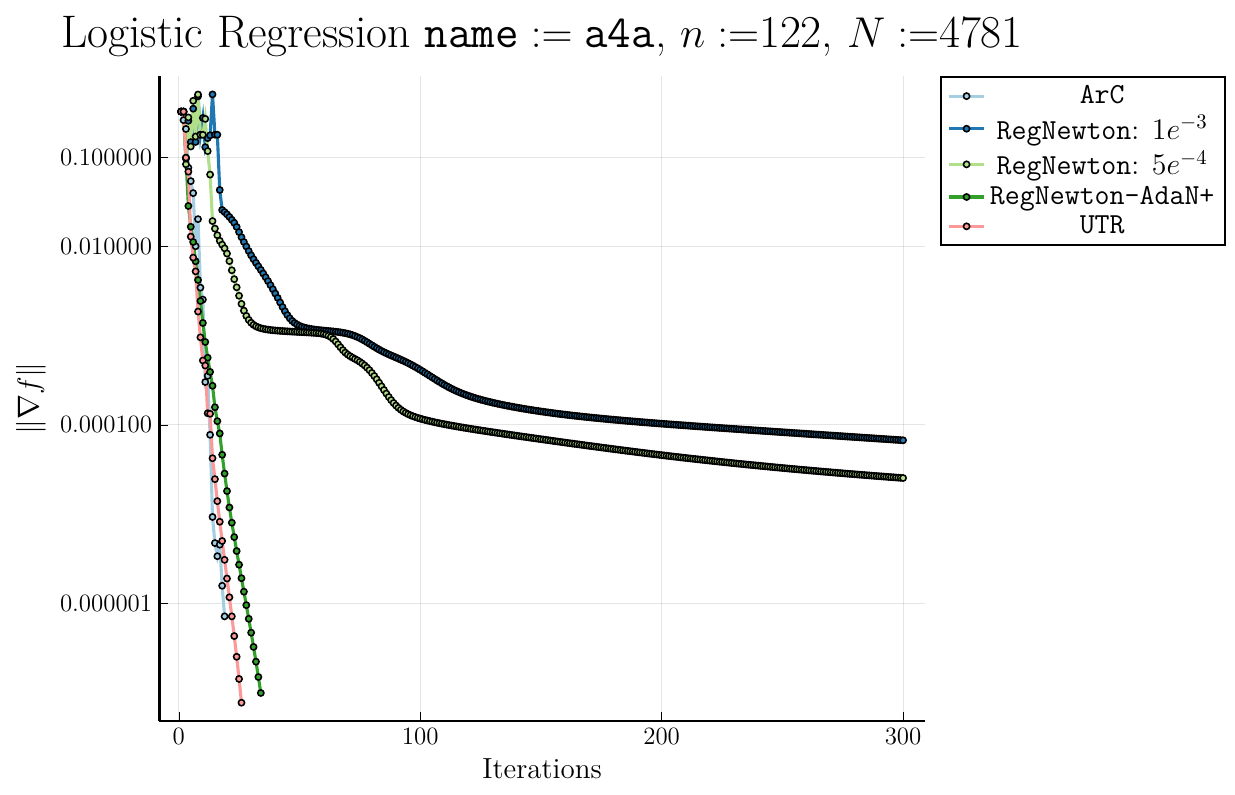}
    \end{subfigure}
    ~
    \begin{subfigure}{.48\textwidth}
        \includegraphics[width=0.99\linewidth]{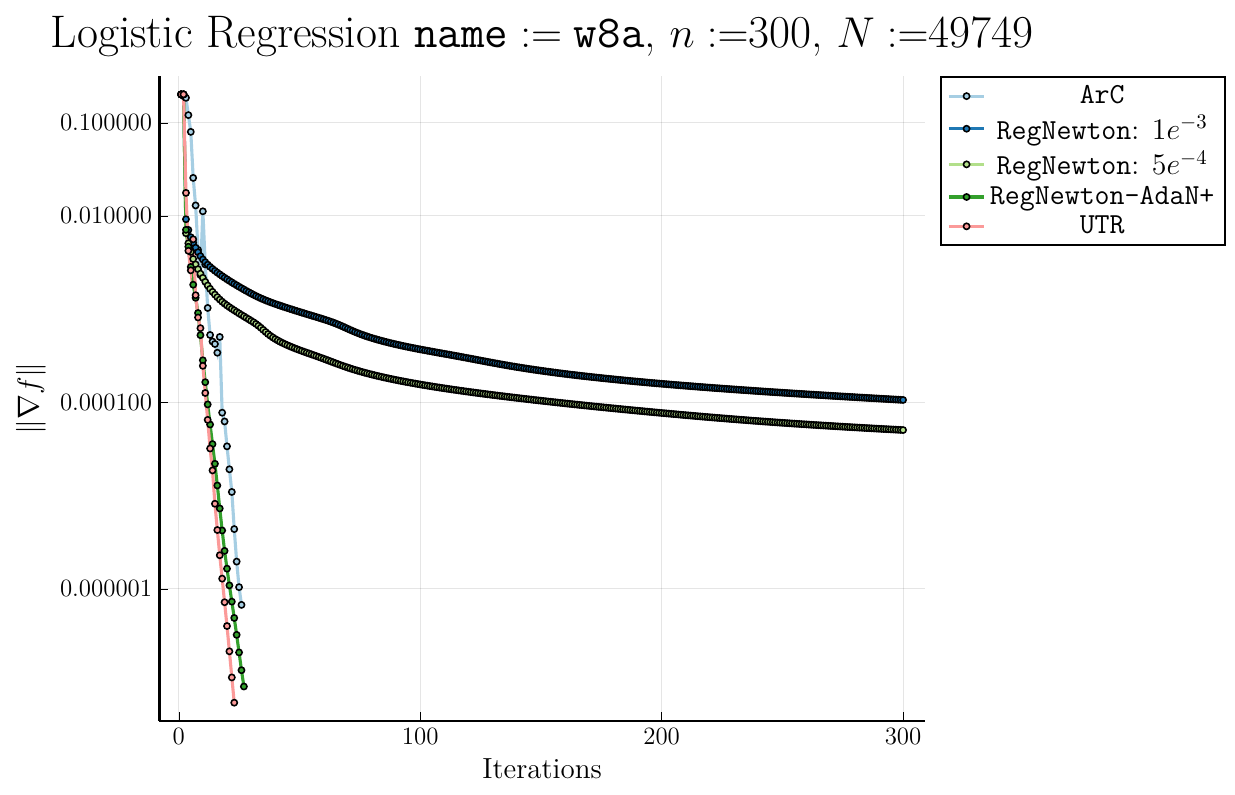}
    \end{subfigure}
    \caption{Logistic regression on LIVSVM instances: \texttt{a4a} (left) and \texttt{w8a} (right).}
    \label{fig.perfprof}
\end{figure}
% \begin{figure}[ht]
%     \centering
%     \endnotesize
%     \begin{subfigure}{.48\textwidth}
%         \centering
%         \includegraphics[width=0.99\linewidth]{e-logistic-a4a-k}
%     \end{subfigure}
%     ~
%     \begin{subfigure}{.48\textwidth}
%         \includegraphics[width=0.99\linewidth]{e-logistic-w8a-k}
%     \end{subfigure}
%     \caption{Logistic regression on LIBSVM instances: \texttt{a4a} (left) and \texttt{w8a} (right).}
%     \label{fig.perfprof}
% \end{figure}

In \autoref{fig.perfprof}, we profile the performance of these methods in minimizing the gradient norm. The results show that the adaptive universal trust-region method is comparable to $\texttt{ArC}$ and \texttt{RegNewton-AdaN+}, implying its competence in minimizing the convex functions.

\paragraph{Matrix Completion}
A final experiment is conducted on matrix completion problems motivated by a recent TR method, CAT \citep{hamad2022consistently}. The goal is to recover a power consumption matrix $D \in \mathbb{R}^{n_1 \times n_2}$ using a partially observed set $\Omega = \{(i, j)\}$ and the low-rank representation $D = PQ^T$, where $P \in \mathbb{R}^{n_1 \times r}$ and $Q \in \mathbb{R}^{n_2 \times r}$ with $r \ll n_1$ and $r \ll n_2$. In the above $n_1$ represents the
number of measurements taken per day within a 15 mins interval and $n_2$ represents the number
of days. We allow the same baseline estimate as~\cite{hamad2022consistently}:
\begin{equation}\label{eq.mat.reg}
d_{i,j} = \mu + r_i + c_j,
\end{equation}
where $\mu$ represents the average of all observed measurements, $r_i,c_j$ captures the deviations during time $i$ and day $j$. We focus on the following regularized problem:
$$
\min_{r,c,p,q} \sum_{(i,j) \in \Omega} (D_{i,j} - \mu - r_i - c_j - p_i q_j^T)^2 + \lambda (r_i^2 + c_j^2) + \lambda (\|p_i\|^2_2 + \|q_j\|^2_2)
$$
to fit the observed data under \eqref{eq.mat.reg}, where $D_{i,j}$ denotes the observed data recorded at time $i$ and day $j$.
The dataset used for this study is sourced from Ausgrid \citep{hamad2022consistently} that deals with the matrix size of $48 \times 30$, $r=9$, all algorithms are terminated at $\|\nabla f(\xk)\| \le 10^{-7}$. We vary $\lambda \in [10^{-2},10^{-3},10^{-4}]$ to test the robustness under small regularizations. The results are presented in \Cref{tab.matcom}. While smaller $\lambda$ induces harder instances, \iutrhvp{} has the best performance among competing algorithms in all instances.  Since the CAT method only converges to $\epsilon$-FOSPs according to its theory, we only include its results when $\lambda = 10^{-2}$.

\begin{table}[h]
\centering
\caption{Performance of different algorithms on Matrix Completion \citep{hamad2022consistently}. Here, we present the average iteration number, function, and gradient evaluations of five runs. We limit the run time to 1,000 seconds. }
\label{tab.matcom}
\begin{tabular}{lrrrr}
\toprule
$\lambda$ & method & $\bar k$ & $\bar k^f$ & $\bar k^g$ \\
\midrule
\multirow{3}{*}{$10^{-2}$} & \texttt{CAT}    & 212   & 213    & 213 \\
                      & \arc{}           & 81       & 69    & 52 \\
                      & \newtontrst{}    & 80       & 81   & 55 \\
                      & \iutrhvp{}        & 39       & 51    & 39 \\
                      \midrule
\multirow{3}{*}{$10^{-3}$} & \arc{}           & 271       &  152 & 121 \\
                      & \newtontrst{}    & 271       & 271    & 171 \\
                      & \iutrhvp{}        & 78       & 91    & 78 \\
                      \midrule
\multirow{3}{*}{$10^{-4}$}& \arc{}           & 498       & 501    & 315 \\
                      & \newtontrst{}    & 502       & 316    & 238 \\
                      & \iutrhvp{}        & 164       & 187    & 164 \\
\bottomrule
\end{tabular}
\end{table}

% \section*{Acknowledgement}
% \addcontentsline{toc}{section}{Acknowledgements}
% TBA.

\addcontentsline{toc}{section}{References}
%\newpage
\clearpage

% Appendix here
% Options are (1) APPENDIX (with or without general title) or
%             (2) APPENDICES (if it has more than one unrelated sections)
% Outcomment the appropriate case if necessary
%
% \begin{APPENDIX}{Technical Proofs}

% \end{APPENDIX}
%
%   or
%
\begin{appendix}
\section{Technical Proofs}\label{sec.technical proofs}
\paragraph{Proof to \autoref{lem.bounded g}}
\begin{proof}
Our proof is based on mathematical induction. Note that for all $k<j_f$, $\|\nabla f(x_k)\| >\epsilon$, \eqref{eq.uniform upperbound g} holds for $i=0$. Suppose that \eqref{eq.uniform upperbound g} also holds for $i=k<j_f$, we will show that it holds for $i=k+1\leq j_f$. Since we use \autoref{strategy.simple strategy} and \eqref{eq.posterior.con} holds, from the second-order Lipschitz continuity of the Hessian matrix, we have
        \begin{equation}
            \label{eq.bound gradient}
            \begin{aligned}
                \| \nabla f(x_{k+1})\| &\leq \frac{M}{2}\|d_k\|^2+\|\nabla f(x_k)+\nabla ^2 f(x_k) d_k\|\\
                &\leq \frac{M}{2}r_k^2 \|\nabla f(x_k)\| +\|\left( \lambda_k I+\sigma_k \|\nabla f(x_k)\|^{1/2}I\right) d_k\|\\
                &\leq \left (\frac{M}{2}r_k^2 +\sigma_k r_k\right)\|\nabla f(x_k)\|+\lambda_k\|d_k\|\\
                &\leq \xi \|\nabla f(x_k)\| +\frac{1}{3\sqrt{M}}\lambda_k \|\nabla f(x_k)\|^{1/2}.
            \end{aligned}
        \end{equation}
        Let us discuss according to the value of $\|\nabla f(x_k)\|$. First, suppose that $\|\nabla f(x_k)\| \geq \left (\frac{\sqrt{M}}{\kappa}\left (f(x_0)-f^*\right )\right)^{2/3}$, then we must have $\lambda_k = 0$ and 
        \begin{equation*}
            \| \nabla f(x_k+d_k)\| \leq \xi \|\nabla f(x_k)\|.
        \end{equation*}
        Otherwise, we have $\lambda_k>0$ and from \eqref{eq.conceptual function decrease in epsilon} it holds that
        \begin{equation*}
    f(\xk+\dk) \leq f(\xk)-\frac{\kappa}{\sqrt{M}}\|\nabla f(x_k)\|^{3/2} \leq f(x_0)-\frac{\kappa}{\sqrt{M}}\|\nabla f(x_k)\|^{3/2} < f^*.
\end{equation*}
We come to a contradiction and thus the claim holds.

Now consider the case $\epsilon \leq \|\nabla f(x_k)\| <\left (\frac{\sqrt{M}}{\kappa}\left (f(x_0)-f^*\right )\right)^{2/3}$. In this case if $\lambda_k=0$, \eqref{eq.uniform upperbound g} obviously holds, then we go to the case $\lambda_k>0$. We echo that \eqref{eq.functionvalue g} and \autoref{strategy.simple strategy} states that 
        \begin{equation*}
        \begin{aligned}
            f(\xk+\dk) &\leq f(\xk) - \left(\frac{1}{2r_k} \cdot \frac{\lambda_k}{\|\nabla f(x_k)\|^{1/2}} + \frac{\sigma_k}{2r_k} - \frac{M}{6}\right)r_k^3\|\nabla f(x_k)\|^{3/2}\\
            &\leq f(x_k)-\left(\frac{1}{2r_k} \cdot \frac{\lambda_k}{\|\nabla f(x_k)\|^{1/2}}\right) r_k^3 \|\nabla f(x_k)\|^{3/2}\\
            &\leq f(x_k)-\frac{1}{18M} \lambda_k \|\nabla f(x_k)\|.
            \end{aligned}
        \end{equation*}
        Therefore we have
        \begin{equation}
            \label{eq.bound lambda g}
            \lambda_k \|\nabla f(x_k)\| \leq 18M\left(f(x_0)-f^*\right).
        \end{equation}
        Plugging \eqref{eq.bound lambda g} into \eqref{eq.bound gradient}, we have
        \begin{equation}
            \label{eq.bound gradient case 2}
            \|\nabla f(x_k+d_k)\| \leq \xi \|\nabla f(x_k)\| +6\sqrt{M}\|\nabla f(x_k)\|^{-1/2}\left(f(x_0)-f^*\right).
        \end{equation}
        Consider function
        \begin{equation*}
            g(t) = \xi t +\frac{6\sqrt{M}}{\sqrt{t}}\left(f(x_0)-f^*\right), t\in \left [\epsilon,\left (\frac{\sqrt{M}}{\kappa}\left (f(x_0)-f^*\right )\right)^{2/3}\right ).
        \end{equation*}
        By taking derivative, we know that $g$ achieves its maximum at the bracket points of the interval, and consequently
        \begin{equation*}
            g(t) \leq \max\left\{\xi \epsilon +\frac{6\sqrt{M}}{\sqrt{\epsilon}}\left(f(x_0)-f^*\right),\left(\xi+6\kappa^{1/3}\right)\left (\frac{\sqrt{M}}{\kappa}\left (f(x_0)-f^*\right )\right)^{2/3}\right\}.
        \end{equation*}
        Noting that from  \autoref{strategy.simple strategy}, $\xi = \frac{1}{6}, \kappa = \frac{1}{81}$ and the fact that $\epsilon$ is a small quantity, we have $\xi \epsilon \leq \frac{6\sqrt{M}}{\sqrt{\epsilon}}\left(f(x_0)-f^*\right)$, $\xi+6\kappa^{1/3} \leq \frac{1}{\xi}$, and thus
        \begin{equation*}
            g(t) \leq \max\left\{\frac{12\sqrt{M}}{\sqrt{\epsilon}}\left(f(x_0)-f^*\right),\frac{1}{\xi}\left (\frac{\sqrt{M}}{\kappa}\left (f(x_0)-f^*\right )\right)^{2/3}\right\}.
        \end{equation*}
        Therefore from \eqref{eq.bound gradient case 2} we know that the claim holds for $i=k+1$ and the proof is finished.
\end{proof}
\paragraph{Proof to \autoref{lem.upperbound sigma}}
\begin{proof}
    It is sufficient to show that for every $k$-th outer iteration, whenever the parameter $\rho_k^{(j)}$ satisfies
    \begin{equation}\label{eq.succ the}
        \rho_k^{(j)} \geq \max \left \{\sqrt{\frac{M}{12(1-32\eta)}},\sqrt{\frac{M}{6(1-8\eta)}},\sqrt{\frac{M}{32\xi-8}},\sqrt{\frac{M}{8\xi}},\sqrt{\frac{M\xi}{8(1-\xi)}}\right \},
    \end{equation}
    the inner loop will terminate. Firstly, we consider the case where $\|\nabla f(x_k)\|\leq \epsilon$ and $\lambda_{\min}(\nabla^2 f(x_k))\leq -\rho_k^{(j)} \epsilon^{1/2}$. To facilitate the analysis, we introduce the concept of \emph{eigenpoint} within the trust region:
    \begin{equation}
        \label{eq.eigen point}
        d_k^E:= \frac{\epsilon^{1/2}}{2\rho_k^{(j)}}v_k \; \;\mbox{such that}\;\; v_k^T\nabla f(x_k) \leq 0,
    \end{equation}
    where $v_k$ is the unit eigenvector corresponding to the smallest eigenvalue $\lambda_{\min}(\nabla^2 f(x_k))$. Note that for the eigenpoint $d_k^E$, it follows
    \begin{equation*}
        \nabla f(x_k)^T d_k^E +\frac{1}{2}\left (\dk^E \right )^T \nabla^2 f(x_k) \dk^E\leq \frac{1}{2}\left (\dk^E \right )^T \nabla^2 f(x_k) \dk^E\leq -\frac{1}{8\rho_k^{(j)}}\epsilon^{3/2}.
    \end{equation*}
    Moreover, since the eigenpoint is feasible, once the parameter $\rho_k^{(j)}$ satisfies
    $$
        \rho_k^{(j)} \geq \sqrt{\frac{M}{6(1-8\eta)}},
    $$
    we have
    \begin{align}
        \nonumber
        f(\xk+\dk^{(j)})-f(\xk) & \leq \nabla f(x_k)^T d_k^{(j)} +\frac{1}{2}\left (\dk^{(j)} \right )^T \nabla^2 f(x_k) \dk^{(j)}+\frac{M}{6}\left\|\dk^{(j)}\right\|^3 \\
        \nonumber
                                & \leq \nabla f(x_k)^T d_k^E +\frac{1}{2}\left (\dk^E \right )^T \nabla^2 f(x_k) \dk^E+\frac{M}{6}\left\|\dk^{(j)}\right\|^3             \\
        \nonumber
                                & \leq -\frac{1}{8\rho_k^{(j)}}\epsilon^{3/2}+\frac{M}{48(\rho_k^{(j)})^3}\epsilon^{3/2}.               \\
                                & \leq -\frac{\eta}{\rho_k^{(j)}}\epsilon^{3/2},
        \label{eq.function decrease adaptive case1}
    \end{align}
    where the second inequality is from the optimality, and the third inequality is because of \eqref{eq.optcond coml slack} and \eqref{eq.optcond secondorder}. As a result, the sufficient descent is satisfied.

    When $\|\nabla f(x_k)\| >\epsilon$, we have three possible outcomes:
    \begin{itemize}[leftmargin=*]
        \item The first case is
              $$\lambda_{\min}(\nabla^2 f(x_k)) \leq -\rho_k^{(j)}\|\nabla f(x_k)\|^{1/2}.$$
              The analysis is almost the same as that in the above case, except that we need to replace $\epsilon$ with $\|\nabla f(x_k)\|$.
        \item The second case is
              $$\lambda_{\min}(\nabla^2 f(x_k)) \geq \rho_k^{(j)}\|\nabla f(x_k)\|^{1/2},$$
              and we need to divide this case into two subcases. The first one is that the dual variable $\lambda_k^{(j)}>0$, then it follows $\left\|\dk^{(j)}\right\| = \frac{1}{2\rho_k^{(j)}}\|\nabla f(x_k)\|^{1/2}$. Moreover, once the parameter $\rho_k^{(j)}$ satisfies
    $$
        \rho_k^{(j)} \geq \sqrt{\frac{M}{6(1-8\eta)}},
    $$
    we have
              \begin{align}
                  \nonumber
                  f(\xk+\dk^{(j)})-f(\xk) & \leq \nabla f(x_k)^T\dk^{(j)}+\frac{1}{2}(\dk^{(j)})^T \nabla^2 f(x_k)^{(j)}\dk^{(j)}+\frac{M}{6}\left\|\dk^{(j)}\right\|^3                \\
                  \nonumber
                                          & = -\frac{1}{2}(\dk^{(j)})^T \nabla^2 f(x_k)^{(j)}\dk^{(j)} -\lambda_k^{(j)}\left\|\dk^{(j)}\right\|^2+\frac{M}{6}\left\|\dk^{(j)}\right\|^3 \\
                  \nonumber
                                          & \leq -\frac{1}{2}\rho_k^{(j)}\|\nabla f(x_k)\|^{1/2}\left\|\dk^{(j)}\right\|^2+\frac{M}{6}\left\|\dk^{(j)}\right\|^3                      \\
                                          & \leq -\frac{\eta}{\rho_k^{(j)}}\|\nabla f(x_k)\|^{3/2}.
                  \label{eq.function decrease adaptive case2}
              \end{align}
              The second subcase is the dual variable $\lambda_k^{(j)} = 0$. Once the parameter $\rho_k^{(j)}$ satisfies
    $$
        \rho_k^{(j)} \geq \sqrt{\frac{M}{8\xi}},
    $$
    we have
              \begin{align}
                  \nonumber
                  \|\nabla f(\xk+\dk^{(j)})\| & \leq \|\nabla^2 f(x_k)^{(j)}\dk^{(j)}+\nabla f(x_k)\| +\frac{M}{2}\left\|\dk^{(j)}\right\|^2 \\
                  \nonumber
                                              & = \frac{M}{2}\left\|\dk^{(j)}\right\|^2    \\       \nonumber      &= \frac{M}{8 (\rho_k^{(j)})^2}\|\nabla f(x_k) \|\\
                                        \label{eq.gradient norm adaptive case 2}
                  &\leq \xi\|\nabla f(x_k)\|.
              \end{align}        
              It is easy to see the function value is decreasing.
        \item The third case is
              $$-\rho_k^{(j)}\|\nabla f(x_k)\|^{1/2}<\lambda_{\min}(\nabla^2 f(x_k)) < \rho_k^{(j)}\|\nabla f(x_k)\|^{1/2}.$$
              Similarly, on one hand if $\lambda_k^{(j)}>0$, then $\left\|\dk^{(j)}\right\| = \frac{1}{4\rho_k^{(j)}}\|\nabla f(x_k)\|^{1/2}$.
              Once the parameter $\rho_k^{(j)}$ satisfies
    $$
        \rho_k^{(j)} \geq \sqrt{\frac{M}{12(1-32\eta)}},
    $$
              we have
              \begin{align}
                  \nonumber
                  f(\xk+\dk^{(j)})-f(\xk)  \leq& \nabla f(x_k)^T\dk^{(j)}+\frac{1}{2}(\dk^{(j)})^T \nabla^2 f(x_k)^{(j)}\dk^{(j)}+\frac{M}{6}\left\|\dk^{(j)}\right\|^3                                                          \\
                  \nonumber
                                           =& -\frac{1}{2}(\dk^{(j)})^T \nabla^2 f(x_k)^{(j)}\dk^{(j)} -\lambda_k^{(j)}\left\|\dk^{(j)}\right\|^2\\
                                          \nonumber
                                          &-\rho_k^{(j)}\|\nabla f(x_k)\|^{1/2} \left\|\dk^{(j)}\right\|^2+\frac{M}{6}\left\|\dk^{(j)}\right\|^3 \\
                  \nonumber
                                           \leq& -\frac{1}{2}\rho_k^{(j)}\|\nabla f(x_k)\|^{1/2}\left\|\dk^{(j)}\right\|^2+\frac{M}{6}\left\|\dk^{(j)}\right\|^3                                                               \\
                                           \leq& -\frac{\eta}{\rho_k^{(j)}}\|\nabla f(x_k)\|^{3/2}.
                  \label{eq.function decrease adaptive case3}
              \end{align}
              On  the other hand, if $\lambda_k^{(j)} = 0$, once the parameter $\rho_k^{(j)}$ satisfies
    $$
        \rho_k^{(j)} \geq \sqrt{\frac{M}{32\xi-8}},
    $$ we have
              \begin{align}
                  \nonumber
                  \|\nabla f(\xk+\dk^{(j)})\| & \leq \|\nabla^2 f(x_k)^{(j)}\dk^{(j)}+\nabla f(x_k)\| +\frac{M}{2}\left\|\dk^{(j)}\right\|^2         \\
                  \nonumber
                                              & = \rho_k^{(j)}\|\nabla f(x_k)\|^{1/2}\left\|\dk^{(j)}\right\|+\frac{M}{2}\left\|\dk^{(j)}\right\|^2 \\
                  \nonumber
                                              & =\frac{1}{4}\|\nabla f(x_k)\|+\frac{M}{32 (\rho_k^{(j)})^2}\|\nabla f(x_k)\|            \\
                                              & \leq \xi\|\nabla f(x_k)\|.
                  \label{eq.gradient norm adaptive case 3}
              \end{align}
              Also, from the last but one line of \eqref{eq.function decrease adaptive case3}, we have $f(\xk+\dk^{(j)})-f\left(\xk\right)\leq 0$.
    \end{itemize}
    So far we have proved \eqref{eq.property adaptive 1}. For \eqref{eq.property adaptive}, noting that $\sigma_k^{(j)} r_k^{(j)} \leq \frac{1}{4}$ in all cases of \autoref{strategy.SOSP}, we have
                \begin{equation}
            \label{eq.bound gradient adaptive}
            \begin{aligned}
                \| \nabla f(x_k+d_k^{(j)})\| &\leq \left (\frac{M}{2}(r_k^{(j)})^2 +\sigma_k^{(j)} r_k^{(j)}\right)\|\nabla f(x_k)\|+\lambda_k^{(j)}\|d_k^{(j)}\|\\
                &\leq \left (\frac{M}{2}(r_k^{(j)})^2 +\frac{1}{4}\right)\|\nabla f(x_k)\|+\lambda_k^{(j)}\|d_k^{(j)}\|.
            \end{aligned}
        \end{equation}
    Thus when $\rho_k^{(j)}\to \rho _{\max}$, \eqref{eq.property adaptive} is guaranteed hold.
    
    Now we consider \eqref{eq.modified convex grad des}, and it holds that 
    \begin{equation*}
        \begin{aligned}
            \|\nabla f(x_k+d_k^{(j)})\| &\leq \frac{M}{2}\|d_k^{(j)}\|^2 +\|\left (\sigma_k^{(j)}I+\lambda_k^{(j)}I\right)d_k^{(j)}\|\\
            &\leq \left(\frac{M(r_k^{(j)})^2}{2}+1\right)\|\nabla f(x_k)\|\\
            &\leq \left(\frac{M}{8(\rho_k^{(j)})^2}+1\right)\|\nabla f(x_k)\|.
        \end{aligned}
    \end{equation*}
    Thus when $\rho_k^{(j)}\to \rho_{\max}$, \eqref{eq.modified convex grad des} holds.
    
    In summary, we show in all cases, the inner loop safely terminates as $\rho_k^{(j)}$ reaches a bounded constant.
\end{proof}
\paragraph{Proof to \autoref{lem.bounded g 2}}
\begin{proof}
        We still prove the result by mathematical induction. The bound obviously holds for $i=0$, and we assume that it also holds for $i=k$. Now let's check $i=k+1$.

        We first check the case $\|\nabla f(x_k)\|\geq \epsilon$. From \eqref{eq.property adaptive}, we have
        \begin{equation}
            \label{eq.gradient bound adaptive proof}
            \| \nabla f(x_k+d_k^{(j)})\| \leq \xi \|\nabla f(x_k)\| + \lambda_k^{(j)} r_k^{(j)} \|\nabla f(x_k)\|^{1/2}.
        \end{equation}
        If $\|\nabla f(x_k)\| \geq \left (\frac{\rho_{\max}\left (f(x_0)-f^*\right )}{\eta}\right)^{2/3}$, from \eqref{cond.modified function or gradient des}, we must have $\|\nabla f(x_k+d_k^{(j)})\| \leq \xi\|\nabla f(x_k)\|$. Otherwise,   
\begin{equation*}
    f(x_k+d_k^{(j)})\leq f(x_k)-\frac{\eta}{\rho_{\max}} \|\nabla f(x_k)\|^{3/2} \leq f(x_0)-\frac{\eta}{\rho_{\max}}\|\nabla f(x_k)\|^{3/2} <f^*,
\end{equation*}
and we come to a contradiction.
        If $\epsilon \leq \|\nabla f(x_k)\| <\left (\frac{\rho_{\max}\left (f(x_0)-f^*\right )}{\eta}\right)^{2/3}$,  we consider two different situations. The first one is $\lambda_k^{(j)} < \frac{2M}{3} r_k^{(j)}\|\nabla f(x_k)\|^{1/2}$, then from \eqref{eq.gradient bound adaptive proof} we have 
        \begin{equation*}
        \begin{aligned}
            \| \nabla f(x_k+d_k^{(j)})\| &\leq \left(\xi+\frac{2M}{3}(r_k^{(j)})^2\right) \|\nabla f(x_k)\| \\
            &\leq \left(\xi+\frac{M}{6(\rho_k^{(j)})^2}\right) \|\nabla f(x_k)\|\\
            &\leq \left(\xi+\frac{M}{6\rho_{\min}^2}\right) \|\nabla f(x_k)\|\\
            &<\left(\xi+\frac{M}{6\rho_{\min}^2}\right)\left (\frac{\rho_{\max}\left (f(x_0)-f^*\right )}{\eta}\right)^{2/3}.
        \end{aligned}
        \end{equation*}
        The second one is $\lambda_k^{(j)} \geq \frac{2M}{3} r_k^{(j)}\|\nabla f(x_k)\|^{1/2}$, where from \eqref{eq.functionvalue g} we have
        \begin{equation*}
            f(x_k+d_k^{(j)}) - f(x_k)\leq - \frac{\lambda_k^{(j)}}{4r_k^{(j)}\|\nabla f(x_k)\|^{1/2}} (r_k^{(j)})^3 \|\nabla f(x_k)\|^{3/2}.
        \end{equation*}
        Rearrange items we have
        \begin{equation*}
            \lambda_k^{(j)}r_k^{(j)}\|\nabla f(x_k)\|^{1/2} \leq \frac{4\left (f(x_0)-f^*\right)}{r_k^{(j)}\|\nabla f(x_k)\|^{1/2}}.
        \end{equation*}
        Plugging the above inequality into \eqref{eq.gradient bound adaptive proof} yields that
        \begin{equation*}
        \begin{aligned}
            \| \nabla f(x_k+d_k^{(j)})\| &\leq \xi \|\nabla f(x_k)\| + \frac{4\left (f(x_0)-f^*\right)}{r_k^{(j)}\|\nabla f(x_k)\|^{1/2}}\\
            &\leq \xi \|\nabla f(x_k)\| + \frac{16\rho_{\max}\left (f(x_0)-f^*\right)}{\|\nabla f(x_k)\|^{1/2}}.
        \end{aligned}
        \end{equation*}
        Since $\epsilon\leq \|\nabla f(x_k)\|<\left (\frac{\rho_{\max}\left (f(x_0)-f^*\right )}{\eta}\right)^{2/3}$, with the same argument as in the fixed case and some basic calculation, we have
        \begin{equation*}
            \|\nabla f(x_k+d_k^{(j)})\| \leq \max \left \{ \frac{32\rho_{\max}\left (f(x_0)-f^*\right )}{\sqrt{\epsilon}},(\xi+16\eta)\left (\frac{\rho_{\max}\left (f(x_0)-f^*\right )}{\eta}\right)^{2/3}\right \},
        \end{equation*}
        and thus the claim follows.
        
        Till now we have finished the proof for the case $\|\nabla f(x_k)\| \geq \epsilon$. Regarding the case $\|\nabla f(x_k)\|< \epsilon$, the only difference in the proof is that \eqref{eq.gradient bound adaptive proof} should be replaced by 
        \begin{equation*}
            \| \nabla f(x_k+d_k^{(j)})\| \leq \xi \epsilon + \lambda_k^{(j)} \frac{1}{2\rho_k^{(j)}} \epsilon^{1/2},
        \end{equation*}
        and \eqref{eq.functionvalue g} should be replaced by 
        \begin{equation*}
            f(x_k+d_k^{(j)})\leq f(x_k)-\left(\frac{\lambda_k^{(j)}}{2r_k^{(j)}\epsilon^{1/2}}-\frac{M}{6}\right)(r_k^{(j)})^3 \epsilon^{3/2}.
        \end{equation*}
        The rest discussion on $\lambda_k^{(j)}$ is the same as the case of $\|\nabla f(x_k)\| \ge \epsilon$, and thus the conclusion follows.
\end{proof}

\end{appendix}

% Acknowledgments here
% \ACKNOWLEDGMENT{We would like to express our sincere gratitude to [acknowledge individuals, organizations, or institutions] for their invaluable contributions to this research. We are also grateful to [mention any additional acknowledgements, such as technical assistance, data providers, or colleagues] for their support and assistance throughout the course of this work.}

% References here (outcomment the appropriate case)

% CASE 1: BiBTeX used to constantly update the references
%   (while the paper is being written).
%\bibliographystyle{informs2014} % outcomment this and next line in Case 1
%\bibliography{<your bib file(s)>} % if more than one, comma separated

\bibliographystyle{unsrtnat}
\bibliography{ref}
% CASE 2: BiBTeX used to generate mypaper.bbl (to be further fine tuned)
%\input{mypaper.bbl} % outcomment this line in Case 2

%If you don't use BiBTex, you can manually itemize references as shown below.

%\bibliographystyle{nonumber}

% \begin{thebibliography}{3}
% \providecommand{\natexlab}[1]{#1}
% \providecommand{\url}[1]{\texttt{#1}}
% \providecommand{\urlprefix}{URL }

% \bibitem[{Smith(2005)}]{smith2005}
% Smith J (2005) Optimal resource allocation in humanitarian logistics.
%   \emph{Journal of Operations Research} 30(2):123--135.
  
% \bibitem[{Jones(2010)}]{jones2010}
% Jones S (2010) Stochastic programming models for humanitarian logistics.
%   \emph{INFORMS Mathematics of Operations Research} 35(4):567--580.

% \bibitem[{Brown(2015)}]{brown2015}
% Brown D (2015) \emph{Introduction to Stochastic Programming} (Springer).

% \end{thebibliography}

%%%%%%%%%%%%%%%%%
\end{document}